\def\ps@pprintTitle{%
 \let\@oddhead\@empty
 \let\@evenhead\@empty
 \def\@oddfoot{\centerline{\thepage}}%
 \let\@evenfoot\@oddfoot}
\titleformat{\section}[hang]{
    \usefont{T1}{bch}{b}{n}\selectfont} 
    {} 
    {0em} 
    {\hspace{-0.4pt}\Large \thesection\hspace{0.6em}} 
    [] 
\providecommand{\tabularnewline}{\\}
\theoremstyle{plain}
\newtheorem{thm}{\protect\theoremname}
  \theoremstyle{definition}
  \newtheorem{defn}[thm]{\protect\definitionname}
  \theoremstyle{plain}
  \newtheorem{lem}[thm]{\protect\lemmaname}
  \theoremstyle{remark}
  \theoremstyle{definition}
  \newtheorem{example}[thm]{\protect\examplename}
  \theoremstyle{plain}
  \newtheorem{cor}[thm]{\protect\corollaryname}
  \providecommand{\corollaryname}{Corollary}
  \providecommand{\definitionname}{Definition}
  \providecommand{\examplename}{Example}
  \providecommand{\lemmaname}{Lemma}
  \providecommand{\remarkname}{Remark}
\providecommand{\theoremname}{Theorem}
\global\long\def\ka{\mathbf{k}}
\global\long\def\Aut{\operatorname*{Aut}}
\global\long\def\Ann{\operatorname*{Ann}}
\global\long\def\V{\operatorname*{V}}
\global\long\def\J{\mathcal{J}}
\global\long\def\T{\mathcal{T}}
\global\long\def\F{\mathcal{F}}
\global\long\def\G{\operatorname{G}}
\global\long\def\Z{\operatorname{Z}}
\global\long\def\GL{\operatorname{GL}}
\global\long\def\nil{\operatorname*{nilindex}}
\global\long\def\Hom{\operatorname*{Hom}}
\global\long\def\LieN{\operatorname*{LieN}}
\global\long\def\Assoc{\operatorname*{Assoc}}
\global\long\def\AssocN{\operatorname*{AssocN}}
\global\long\def\Jor{\operatorname*{\J\hspace{-0.063cm}or}}
\global\long\def\JorN{\operatorname*{\J\hspace{-0.063cm}orN}}
\begin{document}

\title{Geometric classification of nilpotent Jordan algebras of dimension
five}

\author[ime]{Iryna Kashuba\fnref{fn1}\corref{cor1}}
\ead{kashuba@ime.usp.br}

\author[ime]{Mar\'{\i}a Eugenia Martin\fnref{fn2}}
\ead{eugenia@ime.usp.br}

\cortext[cor1]{Corresponding author}
\fntext[fn1]{The author was supported by CNPq (309742/2013-7).}
\fntext[fn2]{The author was supported by the CAPES scholarship 
 for postdoc program of Mathematics, IME-USP.}

\address[ime]{Instituto de Matem\'atica e Estat\'\i stica, Universidade de S\~ao Paulo, R. do Mat\~ao 1010, 05508-090, S\~ao Paulo, Brazil.}

\begin{abstract} The variety $\JorN_{5}$ of five-dimensional nilpotent
Jordan algebras  structures over an algebraically closed field is investigated.
We show that $\JorN_{5}$ is the union of five irreducible components, four of them 
correspond to the Zariski closure of the $\GL_5$-orbits of four rigid algebras 
and the other one is the Zariski closure of an union of orbits of infinite family of 
algebras, none of them being rigid. 
\end{abstract}

\begin{keyword}
 Jordan algebra \sep deformation of algebras \sep rigidity
\MSC[2010]{14J10, 17C55, 17C10.} 
\end{keyword}

\maketitle

\section{Introduction}

In this paper we study the geometry of nilpotent Jordan algebras over algebraically closed fields
$\ka$ of characteristic $\neq2,3$. Namely, let $V$ be an $n$-dimensional $\ka$-vector space, the bilinear 
maps $V\times V\to V$ form an $n^3$-dimensional algebraic variety $\Hom_{\ka}(V\otimes V,V)$. 
Moreover, Jordan algebra structures satisfy polynomial equations
(originating from both commutativity and Jordan identity) and therefore determine a Zariski-closed subset, 
which we will call the variety of Jordan algebras of dimension $n$. As any affine variety, $\Jor_{n}$ admits 
decomposition in a finite number of irreducible components. Furthermore, nilpotency conditions are also of 
polynomial type and thus the set of nilpotent Jordan algebras in dimension $n$, which will be denoted
by $\JorN_{n}$, possess a structure of algebraic variety embedded in $\Jor_{n}$. The general linear group
$\G=\GL(\V)$ acts on both $\Jor_{n}$ and $\JorN_{n}$ via ``change of basis'', decomposing them into $\G$-orbits which 
correspond to the classes of isomorphic Jordan algebras. We study the geometry of $\JorN(n)$: 
dimension, decomposition into irreducible components and their description. 
The crucial tool for it is the notion of deformation of algebras: 
we say that $\J_1$ is a deformation of $\J_2$
if the $\G$-orbit of $\J_2$ belongs to Zariski closure of the $\G$-orbit of 
$\J_1$. 
Algebra $\J$ whose orbit is an open subset in $\Jor_n$ is of particular 
interest, since it does not admit non-trivial deformations and its closure 
rises to irreducible component. We will call 
such algebras rigid.

Analogously to $\JorN_{n}$ one defines the varieties of nilpotent Lie and
associative algebras. We will denote them $\LieN_{n}$ and $\AssocN_{n}$,
respectively. The geometry of these two varieties is rather different.

In $1968$, F. Flanigan posed a question whether every irreducible component
is the closure of a $\G$-orbit of some rigid algebra $\J$.
In \cite{flanigan} he proved that every component of $\Assoc_{n}$ has to carry an open subset of
non-singular points which is either the orbit of a single rigid algebra
or an infinite union of orbits of algebras which differ only in the multiplication of the
radical. Flanigan has discovered that the second alternative does in fact
occur: there is a component of $\Assoc_{3}$ which consists
entirely of the orbits of an infinite family of three-dimensional
nilpotent algebras. 

In \cite{mazzolacocycles}, G.\,Mazzola has proved that 
the subvariety of commutative algebras inside $\AssocN_{n}$
is irreducible for $n\leq 6$ and it admits at least
two irreducible components in dimension $7$. For the non-commutative case the description is 
known only for $n\leq 5$. A.\,MakhLouf showed that $\AssocN_{2}$ is irreducible, while 
the number of irreducible components of $\AssocN_{n}$ when $n$ is equal to $3$, $4$ and $5$ is 
two, four and thirteen, respectively. Further, the author estimated a lower 
bound of 
the number of irreducible components in any dimension, see \cite{Makhlouf1993}. 

For the variety of nilpotent Lie algebras $\LieN_{n}$
the geometric classification is known for $n\leq 8$. In $1988$,
F. Grunewald and J. O'Halloran in \cite{Grunewald1988} established 
that $\LieN_{n}$ is irreducible when $n\leq 5$. Two years later,
S. Craig proved that this is the case for dimension $6$ as well, see 
\cite{Seeley6dim}.
The first reducible variety of nilpotent Lie algebras appears in dimension
$7$, see \cite{Goze1992}. It was shown by M. Goze and J. M. Ancochea-Bermudez. 
that $\LieN_{7}$ decomposes into two algebraic components each corresponding 
to infinite family of algebras. Finally,  in \cite{AncocheaBermudez199611} the
authors described all eight components of the variety of nilpotent
Lie algebras of dimension $8$.

As for the variety of Jordan algebras the references are rather recent. 
In \cite[2005]{irynashesta}, \cite[2014]{kashubamartin}
it was shown that in $\Jor_{n}$ for $n\leq4$ any irreducible component is determined by a rigid algebra.
In \cite[2011]{ancocheabermudes}, the authors establish that the
subvariety of nilpotent algebras $\JorN_{n}$ is irreducible
for $n\leq3$ and admits two irreducible components if $n=4$. 

In this paper, we will show that $\JorN_{5}$ is the union of five
irreducible components, there are four rigid algebras and the closure of the union of $\G$-orbits of a certain infinite family 
gives rise to the fifth component. 
 
The paper is organized as follows. Section \ref{sec:preliminaries} begins with some preliminaries on Jordan
algebras, further we establish the list of non-isomorphic nilpotent Jordan algebras of
dimension $5$. Among the non-isomorphic nilpotent Jordan algebras there are 
four 
infinite families depending on one or two parameters from $\ka$. In Section 
\ref{sec:conditions}, we define the variety of
Jordan algebras $\JorN_{n}$, determine useful invariants of deformation 
and show how to construct deformation between algebras or how to ensure
that such deformation does not exist.  In Section \ref{sec:examples}  we give 
several examples for the deformation both of and 
into the infinite families, while in Section \ref{sec:JorN5}, we prove the main 
result of this paper describing deformations
between the algebras in $\JorN_{5}$ and characterizing its the irreducible 
components.

\section{Jordan algebras: definition and basic concepts, list of nilpotent 
algebras up to dimension five\label{sec:preliminaries}}

In this section we present the basic concepts, notations for Jordan algebras and the
algebraic classification of nilpotent Jordan algebras of small dimensions. 
\begin{defn}
A \textbf{Jordan $\ka$-algebra} is a commutative $\ka$-algebra $\J$ satisfying
the Jordan identity, namely 
\begin{align}
((x\cdot x)\cdot y)\cdot x & =(x\cdot x)\cdot(y\cdot x), \qquad x,\,y\in\J.\label{eq:identidadejor}
\end{align}
\end{defn}
\noindent In a Jordan algebra $\J$ we define inductively a series of subsets
by setting 
\begin{gather*}
\J^{1}=\J\text{,}\\
\J^{n}=\J^{n-1}\cdot\J+\J^{n-2}\cdot\J^{2}+\cdots+\J\cdot\J^{n-1}.
\end{gather*}
The subset $\J^{n}$ is called the \textbf{$n$-th power of the algebra
$\J$}. 
\begin{defn}
A Jordan algebra $\J$ is said to be \textbf{nilpotent} if there exists
an integer $n\in\mathbb{N}$ such that $\J^{n}=0$. The least such
number is called the \textbf{index of nilpotency} of the algebra $\J$,
$\nil(\J)$.
\end{defn}

Further in this section we establish a list of all non-isomorphic nilpotent algebras of dimension five. 
For convenience we first describe all
indecomposable nilpotent algebras of dimension up to four, using the notations from \cite{martin}.
Also we drop $\cdot$ and denote a multiplication in $\J$ simply as $xy$. Products that we do not specify 
are understood to be $0$ or determined by commutativity.  

\begin{example}[\it Indecomposable nilpotent Jordan algebras of dimension one,  
and two three]
All nilpotent Jordan algebras of dimension less than four are associative. There is only one nilpotent 
indecomposable algebra in one and two: $\ka n$, with $n^{2}=0$  
and 
$\mathcal{B}_{3}$ generated by $n_{1}$, with $n_{1}^{3}=0$, respectively. There are two
indecomposable nilpotent algebras in dimension three, $\mathcal{T}_{3}$ generated by $n_{1}$,
such that $n_{1}^{4}=0$ and $\mathcal{T}_{4}=\langle n_1,n_2\,|\, n_1^2=n_2^2\rangle$.
\end{example}

In \cite{martin} all Jordan algebras of dimension four over an algebraically 
closed field of characteristic not $2$ were described. In the Table \ref{tab:4dnilpo} we list eight 
non-isomorphic indecomposable nilpotent Jordan algebras. 

\begin{longtable}{|>{\centering}m{1cm}|>{\centering}m{9cm}|}
\caption{\label{tab:4dnilpo}Indecomposable four-dimensional nilpotent Jordan
algebras.}
\tabularnewline
\hline 
$\F$ & Multiplication Table \tabularnewline
\hline 
\endhead
$\mathcal{\F}_{61}$ & $n_{1}^{2}=n_{2}\quad n_{2}^{2}=n_{4}\quad 
n_{1}n_{2}=n_{3}\quad n_{1}n_{3}=n_{4}$ \tabularnewline
\hline 
$\mathcal{\F}_{62}$ & $n_{1}^{2}=n_{2}\quad 
n_{4}^{2}=n_{2}\quad n_{1}n_{2}=n_{3}$  \tabularnewline
\hline 
$\mathcal{\F}_{63}$ & $n_{1}^{2}=n_{2}\quad 
n_{4}^{2}=-n_{2}-n_{3}\quad n_{1}n_{2}=n_{3}\quad n_{2}n_{4}=n_{3}$ 
\tabularnewline
\hline 
$\mathcal{\F}_{64}$ & $n_{1}^{2}=n_{2}\quad 
n_{4}^{2}=-n_{2}\quad n_{1}n_{2}=n_{3}\quad n_{2}n_{4}=n_{3}$ \tabularnewline
\hline 
$\mathcal{\F}_{65}$ & $n_{1}^{2}=n_{2}\quad 
n_{1}n_{2}=n_{3}\quad n_{2}n_{4}=n_{3}$ \tabularnewline
\hline 
$\mathcal{\F}_{66}$ & $n_{1}^{2}=n_{2}\quad 
n_{4}^{2}=n_{3}\quad n_{1}n_{2}=n_{3}$ \tabularnewline
\hline 
$\mathcal{\F}_{69}$ & $n_{1}^{2}=n_{2}\quad n_{1}n_{3}=n_{4}$ \tabularnewline
\hline 
$\mathcal{\F}_{70}$ & $n_{1}^{2}=n_{2}\quad n_{3}n_{4}=n_{2}$ \tabularnewline
\hline 
\end{longtable}

\subsection{Nilpotent Jordan algebras of dimension five\label{sub:Nilpotent5}}

In \cite{mazzolacocycles}, G. Mazzola has classified nilpotent commutative associative (and thus Jordan)
five-dimensional algebras. Keeping the notations from the paper we list the 25 
non-isomorphic such algebras in Table \ref{tab:5dnilpoassoc}. For each algebra 
we calculate the dimension of its automorphism
group $\Aut(\J)$, of its annihilator $\Ann(\J)=\left\{ a\in\J\mid a\J=0\right\} $,
of its second power $\J^{2}$ and the index of nilpotency $\nil(\J)$.

\begin{longtable}{|>{\centering}m{0.8cm}|>{\centering}m{5.8cm}|>{\centering}m{0.8cm}|>{\centering}m{0.8cm}|>{\centering}m{0.8cm}|>{\centering}m{1cm}|}
\caption{Five-dimensional nilpotent associative Jordan
algebras.}\label{tab:5dnilpoassoc}
\tabularnewline
\hline 
$\epsilon$ & Multiplication Table & $\dim$

$\Aut$ & $\dim$

$\Ann$ & $\dim$

$\epsilon^{2}$ & $\operatorname{nil-}$

$\operatorname{index}$\tabularnewline
\endhead
\hline 
$\epsilon_{1}$ & $n_{1}^{2}=n_{2}\quad n_{2}^{2}=n_{1}n_{3}=n_{4}$

$n_{1}n_{2}=n_{3}\quad n_{1}n_{4}=n_{2}n_{3}=n_{5}$ & $5$ & $1$ & $4$ & $6$\tabularnewline
\hline 
$\epsilon_{2}$ & $n_{1}^{2}=n_{2}\quad n_{2}^{2}=n_{1}n_{3}=n_{4}$

$n_{1}n_{2}=n_{3}$ & $6$ & $1$ & $3$ & $5$\tabularnewline
\hline 
$\epsilon_{3}$ & $n_{1}^{2}=n_{2}\quad n_{4}^{2}=n_{5}$

$n_{1}n_{2}=n_{4}n_{5}=n_{3}$ & $6$ & $1$ & $3$ & $4$\tabularnewline
\hline 
$\epsilon_{4}$ & $n_{1}^{2}=n_{3}\quad n_{1}n_{2}=n_{4}$

$n_{1}n_{4}=n_{2}n_{3}=n_{5}$ & $7$ & $1$ & $3$ & $4$\tabularnewline
\hline 
$\epsilon_{5}$ & $\mathcal{\F}_{61}\oplus\ka n_{5}$ & $7$ & $2$ & $3$ & $5$\tabularnewline
\hline 
$\epsilon_{6}$ & $\mathcal{T}_{3}\oplus\mathcal{B}_{3}$ & $7$ & $2$ & $3$ & $4$\tabularnewline
\hline 
$\epsilon_{7}$ & $n_{1}n_{3}=n_{4}\quad n_{2}n_{3}=n_{5}$

$n_{1}n_{2}=n_{4}-n_{5}$ & $7$ & $2$ & $2$ & $3$\tabularnewline
\hline 
$\epsilon_{8}$ & $n_{1}^{2}=n_{3}\quad n_{2}^{2}=n_{1}n_{3}=n_{4}$

$n_{1}n_{2}=n_{5}$ & $8$ & $2$ & $3$ & $4$\tabularnewline
\hline 
$\epsilon_{9}$ & $n_{1}n_{3}=n_{5}$

$n_{1}n_{2}=n_{4}\quad n_{2}n_{3}=-n_{5}$ & $8$ & $2$ & $2$ & $3$\tabularnewline
\hline 
$\epsilon_{10}$ & $n_{1}^{2}=n_{3}\quad n_{1}n_{3}=n_{4}\quad n_{1}n_{2}=n_{5}$ & $9$ & $2$ & $3$ & $4$\tabularnewline
\hline 
$\epsilon_{11}$ & $n_{1}^{2}=n_{2}n_{3}=n_{4}\quad n_{1}n_{3}=n_{5}$ & $9$ & $2$ & $2$ & $3$\tabularnewline
\hline 
$\epsilon_{12}$ & $n_{1}n_{2}=n_{4}\quad n_{1}n_{3}=n_{5}$ & $11$ & $2$ & $2$ & $3$\tabularnewline
\hline 
$\epsilon_{13}$ & $n_{3}^{2}=n_{4}\quad n_{1}n_{2}=n_{3}n_{4}=n_{5}$ & $8$ & $1$ & $2$ & $4$\tabularnewline
\hline 
$\epsilon_{14}$ & $\mathcal{\F}_{66}\oplus\ka n_{5}$ & $9$ & $2$ & $2$ & $4$\tabularnewline
\hline 
$\epsilon_{15}$ & $\mathcal{T}_{4}\oplus\mathcal{B}_{3}$ & $9$ & $2$ & $2$ & $3$\tabularnewline
\hline 
$\epsilon_{16}$ & $n_{1}^{2}=n_{3}\quad n_{2}^{2}=n_{5}\quad n_{1}n_{2}=n_{4}$ & $10$ & $3$ & $3$ & $3$\tabularnewline
\hline 
$\epsilon_{17}$ & $n_{1}^{2}=n_{4}\quad n_{3}^{2}=n_{1}n_{2}=n_{5}$ & $10$ & $2$ & $2$ & $3$\tabularnewline
\hline 
$\epsilon_{18}$ & $\mathcal{T}_{3}\oplus\ka n_{4}\oplus\ka n_{5}$ & $11$ & $3$ & $2$ & $4$\tabularnewline
\hline 
$\epsilon_{19}$ & $\mathcal{B}{}_{3}\oplus\mathcal{B}{}_{3}\oplus\ka n_{5}$ & $11$ & $3$ & $2$ & $3$\tabularnewline
\hline 
$\epsilon_{20}$ & $\mathcal{\F}_{69}\oplus\ka n_{5}$ & $12$ & $3$ & $2$ & $3$\tabularnewline
\hline 
$\epsilon_{21}$ & $n_{2}n_{3}=n_{1}n_{4}=n_{5}$ & $11$ & $1$ & $1$ & $3$\tabularnewline
\hline 
$\epsilon_{22}$ & $\mathcal{\F}_{70}\oplus\ka n_{5}$ & $12$ & $2$ & $1$ & $3$\tabularnewline
\hline 
$\epsilon_{23}$ & $\mathcal{T}_{4}\oplus\ka n_{4}\oplus\ka n_{5}$ & $14$ & $3$ & $1$ & $3$\tabularnewline
\hline 
$\epsilon_{24}$ & $\mathcal{B}{}_{3}\oplus\ka n_{3}\oplus\ka n_{4}\oplus\ka n_{5}$ & $17$ & $4$ & $1$ & $3$\tabularnewline
\hline 
$\epsilon_{25}$ & $\ka n_{1}\oplus\ka n_{2}\oplus\ka n_{3}\oplus\ka n_{4}\oplus\ka n_{5}$ & $25$ & $5$ & $0$ & $2$\tabularnewline
\hline 
\end{longtable}

Recently, in \cite{nilpodim5Arxiv} a method to construct nilpotent Jordan  algebras using central extension of lower dimension algebras was presented. The authors showed that any nilpotent algebra can be obtained via this methods and  
provided a list of $35$ single non-associative Jordan algebras together with $6$ families of algebras 
depending on parameters in $\ka$. 
The list of these algebras is given in Table \ref{tab:5dnilponaoassoc} (using 
the notations \cite{nilpodim5Arxiv})
together with the dimensions of $\Aut(\J)$, $\Ann(\J)$, $\J^{2}$ and of the 
associative center  $\Z(\J)=\{a\in\J\mid(a,\J,\J)=(\J,a,\J)=(\J,\J,a)=0\}$.

\begin{longtable}{|>{\centering}m{1cm}|>{\centering}m{6cm}|>{\centering}m{0.7cm}|>{\centering}m{0.7cm}|>{\centering}m{0.6cm}|>{\centering}m{0.6cm}|}
\caption{Five-dimensional nilpotent non-associative
Jordan algebras.}\label{tab:5dnilponaoassoc}
\tabularnewline
\hline 
$\J$ & Multiplication Table & $\dim$

$\Aut$ & $\dim$

$\Ann$ & $\dim$

$\J^{2}$ & $\dim$

$\Z$\tabularnewline
\endhead
\hline 
$\J_{1}$ & $\F_{65}\oplus\ka n_{5}$ & $8$ & $2$ & $2$ & $3$\tabularnewline
\hline 
$\J_{2}$ & $\F_{64}\oplus\ka n_{5}$ & $9$ & $2$ & $2$ & $3$\tabularnewline
\hline 
$\J_{3}$ & $\F_{63}\oplus\ka n_{5}$ & $8$ & $2$ & $2$ & $3$\tabularnewline
\hline 
$\J_{4}$ & $\F_{62}\oplus\ka n_{5}$  & $7$ & $2$ & $2$ & $3$\tabularnewline
\hline 
$\J_{5}$ & $n_{1}^{2}=n_{2}\quad n_{4}^{2}=n_{2}n_{3}=n_{5}$ & $6$ & $1$ & $2$ & $3$\tabularnewline
\hline 
$\J_{6}$ & $n_{1}^{2}=n_{2}\quad n_{1}n_{4}=n_{2}n_{3}=n_{5}$ & $7$ & $1$ & $2$ & $3$\tabularnewline
\hline 
$\J_{7}$ & $n_{1}n_{2}=n_{3}\quad n_{3}n_{4}=n_{5}$ & $6$ & $1$ & $2$ & $2$\tabularnewline
\hline 
$\J_{8}$ & $n_{1}n_{2}=n_{3}\quad n_{3}n_{4}=n_{1}^{2}=n_{5}$ & $5$ & $1$ & $2$ & $2$\tabularnewline
\hline 
$\J_{9}$ & $n_{1}n_{2}=n_{3}$

$n_{3}n_{4}=n_{1}^{2}=n_{2}^{2}=n_{5}$ & $4$ & $1$ & $2$ & $2$\tabularnewline
\hline 
$\J_{10}$ & $n_{1}n_{2}=n_{3}$

$n_{1}n_{3}=n_{4}^{2}=n_{2}n_{3}=n_{5}$ & $5$ & $1$ & $2$ & $3$\tabularnewline
\hline 
$\J_{11}$ & $n_{1}n_{2}=n_{3}\quad n_{1}n_{3}=n_{4}^{2}=n_{5}$ & $7$ & $1$ & $2$ & $3$\tabularnewline
\hline 
$\J_{12}$ & $n_{1}n_{2}=n_{3}$

$n_{1}n_{3}=n_{4}^{2}=n_{2}^{2}=n_{5}$ & $6$ & $1$ & $2$ & $3$\tabularnewline
\hline 
$\J_{13}$ & $n_{1}n_{2}=n_{3}$

$n_{1}n_{4}=n_{1}n_{3}=n_{2}n_{3}=n_{5}$ & $6$ & $1$ & $2$ & $3$\tabularnewline
\hline 
$\J_{14}$ & $n_{1}n_{2}=n_{3}\quad n_{1}n_{3}=n_{2}n_{4}=n_{5}$ & $7$ & $1$ & 
$2$ & $3$\tabularnewline
\hline 
$\J_{15}^{\alpha}$ & $n_{1}^{2}=n_{2}\quad n_{1}n_{2}=n_{3}\quad n_{4}^{2}=\alpha n_{5}$

$n_{2}^{2}=n_{1}n_{3}=n_{2}n_{4}=n_{5},\,{\scriptstyle \alpha\in\ka}$ & $6$ & $1$ & $3$ & $3$\tabularnewline
\hline 
$\J_{16}$ & $n_{1}n_{2}=n_{3}^{2}=n_{4}\quad n_{1}n_{4}=n_{5}$ & $6$ & $1$ & $2$ & $2$\tabularnewline
\hline 
$\J_{17}$ & $n_{1}n_{2}=n_{3}^{2}=n_{4}$

$n_{1}n_{4}=n_{2}n_{3}=n_{5}$ & $5$ & $1$ & $2$ & $2$\tabularnewline
\hline 
$\J_{18}$ & $n_{1}n_{2}=n_{3}^{2}=n_{4}$

$n_{1}n_{4}=n_{2}^{2}=n_{5}$ & $4$ & $1$ & $2$ & $2$\tabularnewline
\hline 
$\J_{19}$ & $n_{1}n_{2}=n_{3}^{2}=n_{4}\quad n_{3}n_{4}=n_{5}$ & $5$ & $1$ & $2$ & $2$\tabularnewline
\hline 
$\J_{20}$ & $n_{1}n_{2}=n_{3}^{2}=n_{4}\quad n_{3}n_{4}=n_{1}^{2}=n_{5}$ & $4$ & $1$ & $2$ & $2$\tabularnewline
\hline 
$\J_{21}$ & $n_{1}n_{2}=n_{3}^{2}=n_{4}$

$n_{3}n_{4}=n_{1}^{2}=n_{2}^{2}=n_{5}$ & $3$ & $1$ & $2$ & $2$\tabularnewline
\hline 
$\J_{22}$ & $n_{1}^{2}=n_{2}\quad n_{1}n_{2}=n_{3}^{2}=n_{4}$

$n_{2}^{2}=n_{1}n_{4}=n_{5}$ & $4$ & $1$ & $3$ & $3$\tabularnewline
\hline 
$\J_{23}^{\beta}$ & $n_{1}^{2}=n_{3}\quad n_{1}n_{3}=n_{2}n_{4}=n_{5}$

$n_{1}n_{2}=n_{4}\quad n_{2}n_{3}=\beta n_{5},\:{\scriptstyle \beta\in\ka}$ & $6$ & $1$ & $3$ & $3$\tabularnewline
\hline 
$\J_{24}^{\gamma}$ & $n_{1}^{2}=n_{3}\quad n_{1}n_{2}=n_{4}\quad n_{2}n_{3}=n_{5}$

$n_{1}n_{4}=\gamma n_{5},\:{\scriptstyle \gamma\in\ka^{*}-\{1,-\frac{1}{2}\}}$  
& $7$ & $1$ & $3$ & $3$\tabularnewline
\hline 
$\J_{24}^{0}$ & $n_{1}^{2}=n_{3}\quad n_{1}n_{2}=n_{4}\quad 
n_{2}n_{3}=n_{5}$  & $7$ & $2$ & $3$ & $3$\tabularnewline
\hline 
$\J_{24}^{-\frac{1}{2}}$ & $n_{1}^{2}=n_{3}\quad n_{1}n_{2}=n_{4}\quad n_{2}n_{3}=n_{5}$

$n_{1}n_{4}=-\frac{1}{2}n_{5}$  & $8$ & $1$ & $3$ & $3$\tabularnewline

\hline 
$\J_{25}$ & $n_{1}^{2}=n_{3}\quad n_{1}n_{2}=n_{4}$

$n_{1}n_{3}=n_{1}n_{4}=n_{5}\quad n_{2}n_{3}=-2n_{5}$ & $7$ & $1$ & $3$ & $3$\tabularnewline
\hline 
$\J_{26}^{\delta}$ & $n_{1}^{2}=n_{3}\quad n_{2}^{2}=n_{4}\quad n_{2}n_{4}=\delta n_{5}$

$n_{1}n_{3}=n_{1}n_{4}=n_{5},\:{\scriptstyle \delta\in\ka^{*}}$ & $6$ & $1$ & $3$ & $3$\tabularnewline
\hline 
$\J_{26}^{0}$ & $n_{1}^{2}=n_{3}\quad n_{2}^{2}=n_{4}$

$n_{1}n_{3}=n_{1}n_{4}=n_{5}$ & $6$ & $2$ & $3$ & $3$\tabularnewline
\hline 
$\J_{27}^{\varepsilon,\phi}$ & $n_{1}^{2}=n_{3}\quad n_{2}n_{3}=n_{1}n_{4}=n_{5}$

$n_{2}^{2}=n_{4}\quad n_{2}n_{4}=\varepsilon n_{5}$

$n_{1}n_{3}=\phi n_{5}\:{\scriptstyle \varepsilon,\phi\in\ka,\:\varepsilon\phi\neq1}$ & $6$ & $1$ & $3$ & $3$\tabularnewline
\hline 
$\J_{27}^{\varepsilon,\varepsilon^{-1}}$ & $n_{1}^{2}=n_{3}\quad n_{2}n_{3}=n_{1}n_{4}=n_{5}$

$n_{2}^{2}=n_{4}\quad n_{2}n_{4}=\varepsilon n_{5}$

$n_{1}n_{3}=\varepsilon^{-1}n_{5}\quad{\scriptstyle 
\varepsilon\in\ka^{*}-\{-1\}}$ & $6$ & $2$ & $3$ & 
$3$\tabularnewline
\hline 
$\J_{27}^{-1,-1}$ & $n_{1}^{2}=n_{3}\quad 
n_{2}n_{3}=n_{1}n_{4}=n_{5}$

$n_{2}^{2}=n_{4}\quad n_{2}n_{4}=n_{1}n_{3}=-n_{5}$ & 
$6$ & $2$ & $3$ & $3$\tabularnewline
\hline 
$\J_{28}$ & $n_{1}^{2}=n_{2}\quad n_{2}n_{3}=n_{4}\quad n_{3}^{2}=n_{5}$ & $7$ & $2$ & $3$ & $3$\tabularnewline
\hline 
$\J_{29}$ & $n_{1}^{2}=n_{2}\quad n_{2}n_{3}=n_{4}\quad n_{1}n_{3}=n_{5}$ & $7$ 
& $2$ & $3$ & $3$\tabularnewline
\hline 
$\J_{30}$ & $n_{1}^{2}=n_{2}\quad n_{2}n_{3}=n_{4}$

$n_{1}n_{3}=n_{3}^{2}=n_{5}$ & $6$ & $2$ & $3$ & $3$\tabularnewline
\hline 
$\J_{31}$ & $n_{1}^{2}=n_{2}\quad n_{2}n_{3}=n_{4}\quad n_{1}n_{2}=n_{5}$ & $7$ & $2$ & $3$ & $3$\tabularnewline
\hline 
$\J_{32}$ & $n_{1}^{2}=n_{2}\quad n_{2}n_{3}=n_{4}$

$n_{1}n_{2}=n_{1}n_{3}=n_{5}$ & $6$ & $2$ & $3$ & $3$\tabularnewline
\hline 
$\J_{33}$ & $n_{1}^{2}=n_{2}\quad n_{2}n_{3}=n_{4}$

$n_{1}n_{2}=n_{3}^{2}=n_{5}$ & $5$ & $2$ & $3$ & $3$\tabularnewline
\hline 
$\J_{34}$ & $n_{1}n_{2}=n_{3}\quad n_{1}n_{3}=n_{4}\quad n_{1}^{2}=n_{5}$ & $8$ & $2$ & $3$ & $3$\tabularnewline
\hline 
$\J_{35}$ & $n_{1}n_{2}=n_{3}\quad n_{1}n_{3}=n_{4}\quad n_{2}^{2}=n_{5}$ & $7$ & $2$ & $3$ & $3$\tabularnewline
\hline 
$\J_{36}$ & $n_{1}n_{2}=n_{3}\quad n_{1}n_{3}=n_{4}$

$n_{2}n_{3}=n_{5}$ & $6$ & $2$ & $3$ & $3$\tabularnewline
\hline 
$\J_{37}$ & $n_{1}^{2}=n_{2}^{2}=n_{5}\quad n_{1}n_{2}=n_{3}$ \quad 
$ n_{1}n_{3}=n_{4}$ & $6$ & $2$ & $3$ & 
$3$\tabularnewline
\hline 
$\J_{38}$ & $n_{1}n_{2}=n_{3}\quad n_{1}n_{3}=n_{4}$

$n_{1}^{2}=n_{2}n_{3}=n_{5}$ & $5$ & $2$ & $3$ & $3$\tabularnewline
\hline 
$\J_{39}$ & $n_{1}n_{2}=n_{3}\quad n_{1}^{2}=n_{5}$

$n_{2}^{2}=n_{1}n_{3}=n_{4}$ & $7$ & $2$ & $3$ & $3$\tabularnewline
\hline 
$\J_{40}$ & $n_{1}n_{2}=n_{3}\quad n_{2}^{2}=n_{1}n_{3}=n_{4}$

$n_{1}^{2}=n_{2}n_{3}=n_{5}$ & $4$ & $2$ & $3$ & $3$\tabularnewline
\hline 
$\J_{41}^{\lambda}$ & $n_{1}^{2}=n_{5}\quad n_{1}n_{2}=n_{3}\quad 
n_{2}^{2}=\lambda n_{5}$

$n_{1}n_{3}=n_{2}n_{3}=n_{4},\:{\scriptstyle \lambda\in\ka}$ & $6$ & $2$ & $3$ & 
$3$\tabularnewline
\hline 
\end{longtable}

\begin{thm}\label{thm:classification}
In Tables \ref{tab:5dnilpoassoc} and \ref{tab:5dnilponaoassoc}  all algebras 
are one-to-one non-isomorphic except for
\begin{enumerate}
\item\label{family-one} $\J_{15}^{\alpha_{1}}\simeq\J_{15}^{\alpha_{2}}$, 
$\J_{24}^{\alpha_{1}}\simeq\J_{24}^{\alpha_{2}}$ if and only if 
$\alpha_1=\alpha_2$;
\item\label{family-two} $\J_{23}^{\beta_{1}}\simeq\J_{23}^{\beta_{2}}$, 
$\J_{26}^{\beta_{1}}\simeq\J_{26}^{\beta_{2}}$ if and only if 
$\beta_1=\pm\beta_2$;
\item\label{family-three} 
$\J_{27}^{\alpha_{1},\beta_{1}}\simeq\J_{27}^{\alpha_{2},\beta_{2}}$ if and 
only if $(\alpha_{1},\beta_{1})=(\alpha_{2},\beta_{2})$
or $(\alpha_{1},\beta_{1})=(\beta_{2},\alpha_{2})$;
\item\label{family-four} $\J_{41}^{\lambda_{1}}\simeq\J_{41}^{\lambda_{2}}$ if 
and only if 
$\lambda_1=\lambda_2$ or $\lambda_1=\lambda_{2}^{-1}$;
\item\label{family-five} $\J^{1}_{24}$ is in fact associative and corresponds 
to  $\epsilon_4$ from Table\,\ref{tab:5dnilpoassoc};
\item\label{family-six} $\J_{29}\simeq \J^0_{24}$; 
\item\label{family-seven} $\J_{37}\simeq \J_{27}^{-1,-1}$.
\item\label{family-eight} The family $\J_{41}^{\lambda}$ is isomorphic to 
$\J_{27}^{\varepsilon,\varepsilon^{-1}}$ for
$\lambda\neq 0,1$ and 
$\varepsilon=-\frac{(-1+\sqrt{\lambda})^{2}}{(1+\sqrt{\lambda})^{2}}$, while 
$\J_{41}^{1}$ is isomorphic to $\J_{26}^{0}$. Remaining only $\J_{41}^{0}$, 
which will be denoted simply as $\J_{41}$.
\end{enumerate}
\end{thm}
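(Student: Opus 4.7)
The plan is to divide the verification into two tasks: separating algebras that lie in different isomorphism classes, and identifying those that collapse under the isomorphisms listed in (\ref{family-five})--(\ref{family-eight}). For the first task I would rely on the numerical invariants already tabulated: $\dim\Aut(\J)$, $\dim\Ann(\J)$, $\dim\J^{2}$, $\nil(\J)$, and the associative center $\dim\Z(\J)$ (the latter being particularly useful for distinguishing the associative algebras of Table~\ref{tab:5dnilpoassoc} from the non-associative ones of Table~\ref{tab:5dnilponaoassoc}, since $\Z(\J)=\J$ forces $\dim\Z=5$). A preliminary pass through both tables shows that the $5$-tuple of invariants separates all entries into small ``invariant-clusters''; inside the clusters still containing several non-isomorphic algebras I would introduce finer invariants: the isomorphism type of $\J/\Ann(\J)$, the rank of the quadratic map $x\mapsto x^{2}$ modulo $\J^{2}$, the structure of the operator $L_{x}\colon\J\to\J$ for a generic $x$, and the dimensions of $\J^{k}$, $\Ann(\J)\cap\J^{k}$. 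This suffices to separate all the sporadic algebras.

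The second task concerns the six parametric families. For each of $\J_{15}^{\alpha}$, $\J_{23}^{\beta}$, $\J_{24}^{\gamma}$, $\J_{26}^{\delta}$, $\J_{27}^{\varepsilon,\phi}$ and $\J_{41}^{\lambda}$ I would write a generic linear isomorphism $\varphi\colon\J^{a}\to\J^{b}$ in the basis $n_{1},\dots,n_{5}$, impose that it preserves each structure constant of the multiplication table, and reduce the resulting system using the constraint that $\varphi$ must preserve the filtrations by $\J^{k}$ and by $\Ann(\J^{k})$ (which cut down the free parameters of $\varphi$ drastically). The resulting polynomial equations in the matrix entries together with the parameters $(a,b)$ yield in each case precisely the equivalence relations stated in items (\ref{family-one})--(\ref{family-four}). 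For (\ref{family-five})--(\ref{family-seven}) the required change of basis is essentially unique and can be written down by matching annihilators and squares: for instance, identifying $\J_{29}$ with $\J_{24}^{0}$ amounts to exchanging the roles of $n_{1}$ and $n_{3}$ after a rescaling, and $\J_{24}^{1}\simeq\epsilon_{4}$ follows from observing that $\gamma=1$ forces associativity.

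The most delicate item is (\ref{family-eight}), where the family $\J_{41}^{\lambda}$ has to be matched both with $\J_{27}^{\varepsilon,\varepsilon^{-1}}$ (for generic $\lambda$) and with $\J_{26}^{0}$ (at $\lambda=1$). For the generic case I would start from the ansatz $n_{i}'=\sum c_{ij}n_{j}$ with the $c_{ij}$ constrained by the three-step filtration of $\J_{41}^{\lambda}$; matching the pair of squares $n_{1}^{\prime 2},n_{2}^{\prime 2}$ and the two cross-products $n_{1}'n_{3}',n_{2}'n_{3}'$ produces a quadratic relation between $\lambda$ and $\varepsilon$. Solving it gives $\varepsilon=-\bigl(\frac{-1+\sqrt{\lambda}}{1+\sqrt{\lambda}}\bigr)^{2}$, and the branch $\lambda=1$ (where the formula degenerates) must be handled separately, leading to $\J_{26}^{0}$. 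I expect this last identification to be the main obstacle: the parametrisation involves a square root, so one must verify that both choices of $\sqrt{\lambda}$ produce mutually isomorphic algebras via the symmetry $\varepsilon\leftrightarrow\varepsilon^{-1}$ already present in the equivalence relation of family (\ref{family-three}), thereby matching the two-to-one behaviour coming from $\lambda\leftrightarrow\lambda^{-1}$ in (\ref{family-four}).
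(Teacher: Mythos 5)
Your overall architecture (coarse numerical invariants first, then finer tools inside the residual clusters, then explicit changes of basis for the identifications (5)--(8)) matches the paper's, and your treatment of item (8) -- including the observation that the two branches of $\sqrt{\lambda}$ land on $\varepsilon$ and $\varepsilon^{-1}$, which are already identified by the symmetry in item (3) -- is sound. But there is a genuine gap in the first half: you assert, without verification, that the invariants $\J/\Ann(\J)$, the rank of $x\mapsto x^{2}$ modulo $\J^{2}$, the generic $L_{x}$, and $\dim(\Ann(\J)\cap\J^{k})$ ``suffice to separate all the sporadic algebras.'' The paper's proof shows that the residual clusters are genuinely hard: it has to invoke the second cohomology $H^{2}(\J,\J)$ (e.g.\ to split $\J_{8}$--$\J_{17}$--$\J_{19}$, $\J_{6}$--$\J_{11}$--$\J_{14}$, and the five-element cluster containing $\J_{28},\J_{24}^{0},\J_{31},\J_{35},\J_{39}$), the Jacobi space $\operatorname{Jac}(\J)$ (to split $\J_{12}$ from $\J_{13}$ and $\J_{27}^{-1,-1}$ from $\J_{41}$), the presence or absence of specific four-dimensional subalgebras such as $\F_{63}$, $\F_{64}$, $\F_{69}$, $\F_{70}$, and the nilpotency type. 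None of these appear on your list, and it is not at all clear that your proposed invariants distinguish, say, $\J_{8}$ from $\J_{17}$ or $\J_{28}$ from $\J_{39}$; the claim needs to be either computed through or replaced by the finer tools.

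A second, related omission: the non-isomorphism statement in the theorem also requires comparisons \emph{across} different families and between families and sporadic algebras whose tabulated invariants all coincide. The paper must prove directly, by writing down a putative isomorphism matrix and deriving a contradiction (a singular matrix), that $\J_{27}^{\varepsilon,\phi}\not\simeq\J_{26}^{\delta}$, that $\J_{27}^{\varepsilon,\varepsilon^{-1}}\not\simeq\J_{30}$, and that $\J_{36}\not\simeq\J_{32}$. Your plan only discusses generic isomorphisms \emph{within} each family (to recover the equivalence relations (1)--(4), which the paper in fact simply imports from the classification reference) and never addresses these cross-family cases; they are exactly where the brute-force matrix computation is unavoidable, so your proof is incomplete without them.
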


\begin{proof}
Firstly, in \cite{nilpodim5Arxiv} the authors showed the isomorphism from items 
\ref{family-one}-\ref{family-four}.
The isomorphism \ref{family-five} follows immediately from 
the multiplication tables of the corresponding algebras. To see 
\ref{family-six}-\ref{family-eight} consider the change of basis 
\begin{center}
\begin{tabular}{l|l|l}
$\J_{29}\simeq \J_{24}^{0}$ & $\J_{37}\simeq \J_{27}^{-1,-1}$ & 
$\J_{41}^{1}\simeq \J_{26}^{0}$ \tabularnewline
$e_{1}=n_{1}$ & $e_{1}=n_{1}-n_{2}$ & $e_{1}=n_{1}+n_{2}$\tabularnewline
$e_{2}=n_{3}$ & $e_{2}=-n_{1}-n_{2}$ & $e_{2}=in_{1}-in_{2}$\tabularnewline
$e_{3}=n_{2}$ & $e_{3}=-2(n_{3}-n_{5})$ & $e_{3}=2(n_{3}+n_{5})$\tabularnewline
$e_{4}=n_{5}$ & $e_{4}=2(n_{3}+n_{5})$ & $e_{4}=2(n_{3}-n_{5})$\tabularnewline
$e_{5}=n_{4}$ & $e_{5}=2n_{4}$ & $e_{5}=4n_{4}$\tabularnewline
\end{tabular}
\end{center}

and for any $\lambda\neq 0,1$ consider 
$\varepsilon=-\frac{(-1+\sqrt{\lambda})^{2}}{(1+\sqrt{\lambda})^{2}}$ then 
the transformation 
\begin{align*}
e_{1} & =\sqrt{\lambda}n_{1}+n_{2}\\
e_{2} & 
=\left(\frac{\sqrt{\lambda}-\lambda}{1+\sqrt{\lambda}}\right)n_{1}+\left(\frac{
-1+\sqrt{\lambda}}{1+\sqrt{\lambda}}\right)n_{2}\\
e_{3} & =2\sqrt{\lambda}n_{3}+2\lambda n_{5}\\
e_{4} & 
=-\frac{2(-1+\sqrt{\lambda})^{2}\sqrt{\lambda}}{(1+\sqrt{\lambda})^{2}}n_{3}
+\frac{2(-1+\sqrt{\lambda})^{2}\lambda}{(1+\sqrt{\lambda})^{2}}n_{5}\\
e_{5} & =-\frac{2(-1+\sqrt{\lambda})^{2}\sqrt{\lambda}}{1+\sqrt{\lambda}}n_{4}
\end{align*}
gives the isomorphism 
$\J_{41}^{\lambda}\simeq\J_{27}^{\varepsilon,\varepsilon^{-1}}$. 

Further the dimension of the automorphism group, the annihilator, any power of 
algebra, the associative center, index of nilpotency  together with associative 
and non-associative conditions, are invariants of class of isomorphic 
algebras. Therefore any two algebras whose last four columns in 
Table\,\ref{tab:5dnilpoassoc} or  Table\,\ref{tab:5dnilponaoassoc}  are not 
equal are non-isomorphic, and one has to verify
that the algebras are non-isomorphic in the following cases:
\begin{enumerate}
\item $\J_{9}-\J_{18}-\J_{20}$: the four-dimensional non-associative nilpotent
Jordan algebra $\F_{64}$ is a subalgebra of $\J_{20}$ but it is not
a subalgebra nor of $\J_{9}$ neither of $\J_{18}$. Analogously,
$\F_{70}$ is a subalgebra of $\J_{9}$ and is not of $\J_{18}$.
\item $\J_{8}-\J_{17}-\J_{19}$: the dimension of the second cohomology
group of $\J_{19}$ with coefficients in $\J_{19}$, $H^{2}(\J_{19},\J_{19})$,
is $6$ while $\dim H^{2}(\J_{8},\J_{8})=\dim H^{2}(\J_{17},\J_{17})=5$.
For the definition of the cohomology group for Jordan algebras we
refer to \cite{kashubamartin}. On the other hand,
the four-dimensional algebra $\F_{63}$
is a subalgebra of $\J_{8}$ but is not of $\J_{17}$.
\item $\J_{33}\not\simeq\J_{38}$: the three-dimensional nilpotent Jordan algebra
$\T_{4}$ is a subalgebra of $\J_{38}$ but is not of $\J_{33}$.
\item $\J_{5}-\J_{12}-\J_{13}$: $\dim H^{2}(\J_{12},\J_{12})=\dim H^{2}(\J_{13},\J_{13})=13$
 while $\dim H^{2}(\J_{5},\J_{5})=12$
But the dimension of Jacobi space of $\J_{12}$, i.e. 
$$\operatorname{Jac}(\J)=\left\{ a\in\J\mid a(xy)=(ax)y+x(ay),\,\forall 
x,y\in\J\right\},$$
is $4$ whereas $\dim\operatorname{Jac}(\J_{13})=3$.
\item $\J_{7}\not\simeq\J_{16}$: the four-dimensional algebra 
$\F_{71}=\T_{4}\oplus\ka n_{4}$
is a subalgebra of $\J_{7}$ but is not  of $\J_{16}$.
\item $\J_{6}-\J_{11}-\J_{14}$: $\F_{63}\subseteq\J_{14}$ while $\F_{63}\not\subseteq\J_{6}$, adding $\dim H^{2}(\J_{11},\J_{11})=15$ while $\dim H^{2}(\J_{6},\J_{6})=\dim H^{2}(\J_{14},\J_{14})=14$ we obtain the result.
\item $\J_{24}^{\gamma}\not\simeq\J_{25}$: for any $\gamma\neq0$, algebra 
$\F_{69}$ is a subalgebra
of $\J_{24}^{\gamma}$ but is not a subalgebra
of $\J_{25}$. Since $\dim \Ann(\J_{24}^{0})=2$, $\,\J_{25}\not\simeq\J_{24}^{0}$.
\item $\J_{28}-\J_{24}^{0}-\J_{31}-\J_{35}-\J_{39}$: one compares the dimensions 
of both second cohomology
group and Jacobi space, $\dim H^{2}(\J_{35},\J_{35})=10$, 
$\dim H^{2}(\J_{31},\J_{31})=8$, $\dim H^{2}(\J_{28},\J_{28})=\dim 
H^{2}(\J_{39},\J_{39})=11$,
and $\dim H^{2}(\J_{24}^{0},\J_{24}^{0})=12$ , while 
$\dim\operatorname{Jac}(\J_{28})=3$ and $\dim\operatorname{Jac}(\J_{39})=4$.
\item $\J_{15}^{\alpha}-\J_{23}^{\beta}-\J_{26}^{\delta}-\J_{27}^{\varepsilon,\phi}$:
the $\nil(\J_{15}^{\alpha})=5$ for any $\alpha\in\ka$ while it is $4$ 
for the other algebras. Analogously, $\dim H^{2}(\J_{23}^{\beta},\J_{23}^{\beta})=8$
if $\beta\neq0$ and $\dim H^{2}(\J_{23}^{0},\J_{23}^{0})=9$, while
$\dim H^{2}(\J_{26}^{\delta},\J_{26}^{\delta})=\dim 
H^{2}(\J_{27}^{\varepsilon,\phi},\J_{27}^{\varepsilon,\phi})=7$
if $\delta\neq0$ and $\varepsilon\phi\neq1$. 

Next, suppose that
$\J_{27}^{\varepsilon,\phi}\simeq\J_{26}^{\delta}$ for some $\delta,\varepsilon,\phi\in\ka$
and suppose that this isomorphism is given by a matrix $A=(a_{ij})_{i,j=1}^{5}$. 
Then the coefficients $a_{ij}$ have to satisfy the following conditions: 
$a_{ij}=a_{i1}a_{i2}=a_{k5}=0$
for $i=1,2$, $j=3,4,5$,  and $a_{3k}=a_{1(k-2)}^{2}$, $a_{4k}=a_{2(k-2)}^{2}$,
$a_{5k}=+2a_{2(k-2)}(a_{3(k-2)}+a_{4(k-2)}\varepsilon)+2a_{1(k-2)}(a_{4(k-2)}+a_
{3(k-2)}\phi)=0$,
for $k=3,4$. Moreover,\small \begin{align*}
a_{21}a_{32}+a_{11}a_{42}+a_{21}a_{42}\varepsilon+a_{41}(a_{12}+a_{22}
\varepsilon)+a_{11}a_{32}\phi+a_{31}(a_{22}+a_{12}\phi) & =0\\
a_{11}a_{21}(a_{11}+a_{21})-a_{55}+a_{21}^{3}\varepsilon+a_{11}^{3}\phi & =0\\
-a_{55}+a_{21}(a_{12}^{2}+a_{22}^{2}\varepsilon)+a_{11}(a_{22}^{2}+a_{12}^{2}
\phi) & =0\\
a_{21}^{2}(a_{12}+a_{22}\varepsilon)+a_{11}^{2}(a_{22}+a_{12}\phi) & =0\\
a_{12}a_{22}(a_{12}+a_{22})-a_{55}\delta+a_{22}^{3}\varepsilon+a_{12}^{3}\phi & 
=0.
\end{align*}\normalsize
But the system allows only trivial solutions $a_{i5}=0$ for $1\leq i\leq 5$.
\item 
$\J_{26}^{0}-\J_{27}^{\varepsilon,\varepsilon^{-1}}-\J_{30}-\J_{32}-\J_{36}-\J_{
27}^{-1,-1}-\J_{41}$:
the nilpotency type of $\J_{32}$ and $\J_{36}$ is $(2,1,2)$ while
for all other algebras it is $(2,2,1)$ (for definition of nilpotency
type of Jordan algebra we refer to \cite{martin}).
Suppose that $\J_{36}\simeq\J_{32}$ and 
the isomorphism is given by the matrix $A=(a_{ij})_{i,j=1}^{5}$.
Then $a_{ij}$'s have to satisfy the following\
conditions: $a_{ij}=a_{3k}=2a_{i3}a_{33}=0$ for $i=1,2$, $j=2,4,5$, $k=4,5$, 
$a_{n2}=2a_{11}a_{(n-1)1}$, for $n=3,4$, and $a_{52}=2a_{31}a_{21}$,
$2a_{13}a_{23}=0$, $2a_{11}a_{21}a_{23}=a_{54}$, $2a_{11}a_{21}a_{13}=a_{44}$,
$a_{31}a_{23}+a_{21}a_{33}=a_{55}=2a_{11}a_{21}^{2}$, 
$a_{31}a_{13}+a_{11}a_{33}=a_{45}=2a_{11}^{2}a_{21}$,
$a_{21}a_{13}+a_{11}a_{23}=0$. This implies that $a_{i4}=0$ for $1\leq i\leq 5$. 

Comparing the dimensions of the cohomologies we rule out some cases:
$$\begin{array}{c}
\dim H^{2}(\J_{27}^{-1,-1},\J_{27}^{-1,-1})=\dim 
H^{2}(\J_{41},\J_{41})=8,\\ 
\dim H^{2}(\J_{27}^{\varepsilon,\varepsilon^{-1}},
\J_{27}^{\varepsilon,\varepsilon^{-1}})=\dim H^{2}(\J_{30},\J_{30})=7\\ 
\dim H^{2}(\J_{26}^{0},\J_{26}^{0})=10.
\end{array}
$$ 
On the other hand, $\dim\operatorname{Jac}(\J_{27}^{-1,-1})=4$ whereas    
$\dim\operatorname{Jac}(\J_{41})=3$.
It remains to check that  
$\J_{27}^{\varepsilon,\varepsilon^{-1}}\not\simeq\J_{30}$.
Suppose that $A=(a_{ij})_{i,j=1}^{5}$ which gives an isomorphism from 
$\J_{30}$ to $\J_{27}^{\varepsilon,\varepsilon^{-1}}$, then 
$a_{ij}=a_{k5}=0$ for $i=1,3$, $j=3,4,5$
 and $k=2,5$. Moreover, $a_{11}^{2}=a_{23}$, $2a_{21}a_{31}=a_{43}$,
$2a_{11}a_{31}+a_{31}^{2}=a_{53}$, $a_{11}a_{12}=0$, 
$a_{31}a_{22}+a_{21}a_{32}=0$,
$a_{11}a_{32}+a_{31}(a_{12}+a_{32})=0$, 
$a_{32}a_{24}\varepsilon^{-1}=a_{31}a_{23}\varepsilon=a_{45}=a_{31}a_{24}=a_{32}
a_{23}$,
$a_{12}^{2}=a_{24}$, $2a_{22}a_{32}=a_{44}$ and 
$2a_{12}a_{32}+a_{32}^{2}=a_{54}$.
All the solutions lead to $a_{i5}=0$ for $1\leq i\leq 5$, a contradiction. 
\end{enumerate}
\end{proof}

\section{Deformation, variety of Jordan algebras, constructing deformations\label{sec:conditions}}

Let $\V$ be a $n$-dimensional $\ka$-vector space with a fixed
basis $\{e_{1},e_{2},\cdots,e_{n}\}$. To endow $\V$ with a Jordan
$\ka$-algebra structure, $(\J,\cdot)$, it suffices to specify $n^{3}$
structure constants $c_{ij}^{k}\in\ka$, namely, 
$e_{i}\cdot e_{j}=\sum_{k=1}^{n}c_{ij}^{k}e_{k}$,  $i,j\in\{1,2,\dots,n\}$.
If a point $(c_{ij}^{k})\in\ka^{n^{3}}$ defines Jordan algebra its coordinates
satisfy certain polynomial equations which follow  from commutativity and Jordan identity. 
These equations cut out an algebraic variety $\Jor_{n}$ in $\ka^{n^{3}}$.
The points $(c_{ij}^{k})\in\Jor_{n}$ are in one-to-one correspondence with $n$-dimensional
$\ka$-algebras along with a particular choice of basis. A change of basis in
$\J$ is given by the general linear group $\G=\GL(\V)$ action on $\Jor_{n}$, namely: 
\begin{equation} 
g(\J,\cdot)\ \mapsto\ (\J,\cdot_{g}),\qquad x\cdot_{g}y=g(g^{-1}x\cdot g^{-1}y),\label{gl-action-1}
\end{equation}
for any $\J\in\Jor_{n}$, $g\in\G$ and $x,y\in\V$. The \textbf{$\G$-orbit}
of a Jordan algebra $\J$, that is, the set of all images of $\J$
under the action of $\G$, is denoted by $\J^{\G}$. The set of different
$\G$-orbits of this action is in one-to-one correspondence with the
set of isomorphism classes of $n$-dimensional Jordan algebras.
The Zariski topology of the affine space provides the following relation of 
orbits, namely, we say that $\J_{1}$ is a \textbf{deformation} of $\J_{2}$ or that $\J_{1}$
\textbf{dominates} $\J_{2}$, if the orbit $\J_{2}^{\G}$ is contained in the Zariski closure of
the orbit $\J_{1}^{\G}$. We denote it by $\J_{1}\to\J_{2}$. A Jordan algebra $\J$ is called \textbf{rigid}
if its $\G$-orbit is a Zariski-open set in $\Jor_{n}$.  Rigid algebra $\J$ does not admit non-trivial deformations, indeed if $\J_{1}\to \J$ then $\J_{1}^{\G}\cap\J^{\G}\neq\varnothing$ and therefore $\J_{1}\simeq\J$.

As we have already mentioned in the introduction the rigid algebras are of particular interest.
Recall that any affine variety could be decomposed into its irreducible components. 
When $\J\in\Jor_{n}$ is rigid, there exists an irreducible component
$\mathcal{C}$ such that $\J^{\G}\cap\mathcal{C}$ is a non-empty
open subspace in $\mathcal{C}$, and therefore the closure of $\J^{\G}$
coincides with $\mathcal{C}$. The converse is not true: in the sections 
\ref{sec:JorN5} we will show that there exists an irreducible component which does not contain 
a single open $\G$-orbit but rather an open subset which consists of 
infinite union of orbits of algebras.

%
%
%
%
Any $(c_{ij}^{k})\in\Jor_{n}$ which corresponds to a nilpotent Jordan algebra  
has to satisfy polynomial equation, thus the set
of nilpotent Jordan algebras in dimension $n$ possess a structure of algebraic variety embedded
in $\JorN_{n}\subseteq\Jor_{n}$. Further in this section we collect the properties of deformation
which will be used to describe $\JorN_5$. The first fact is straightforward 
but very useful.
\begin{lem}
\label{lem:directsum}\cite{kashubamartin} Let $\J_{i}$ and $\J'_{i}$
be in $\Jor_{n_{i}}$, for $i=1,2$. If $\J_{i}\to\J'_{i}$ , then
$\J_{1}\oplus\J_{2}\to\J'_{1}\oplus\J'_{2}$.\end{lem}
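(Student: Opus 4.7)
The plan is to realize the direct sum as a polynomial, equivariant morphism of varieties, and then transport Zariski closures through it. Concretely, I would fix a splitting $\V=\V_{1}\oplus\V_{2}$ with $\dim\V_{i}=n_{i}$ and consider the inclusion
$$\iota\colon\Jor_{n_{1}}\times\Jor_{n_{2}}\;\longrightarrow\;\Jor_{n_{1}+n_{2}},\qquad(\J_{1},\J_{2})\longmapsto \J_{1}\oplus \J_{2},$$
where the multiplication on the right extends those of $\J_{1}$ and $\J_{2}$ by declaring $\V_{1}\cdot\V_{2}=0$. Read in structure constants, $\iota$ is a linear embedding (the structure-constant array of $\J_{1}\oplus\J_{2}$ is built by block concatenation), so in particular $\iota$ is continuous for the Zariski topology.

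Next I would verify that $\iota$ is equivariant for the block-diagonal homomorphism $\GL_{n_{1}}\times \GL_{n_{2}}\hookrightarrow \GL_{n_{1}+n_{2}}$, $(g_{1},g_{2})\mapsto g_{1}\oplus g_{2}$. A block-diagonal element preserves the decomposition $\V_{1}\oplus\V_{2}$ and restricts to $g_{i}$ on each summand, so the definition of the $\G$-action in \eqref{gl-action-1} immediately yields
$$(g_{1}\oplus g_{2})\cdot(\J_{1}\oplus \J_{2})\;=\;(g_{1}\cdot \J_{1})\oplus(g_{2}\cdot \J_{2})\;=\;\iota(g_{1}\cdot \J_{1},\,g_{2}\cdot \J_{2}),$$
and therefore $\iota\!\left(\J_{1}^{\GL_{n_{1}}}\times \J_{2}^{\GL_{n_{2}}}\right)\subseteq (\J_{1}\oplus \J_{2})^{\GL_{n_{1}+n_{2}}}$.

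Finally, since $\iota$ is continuous and Zariski closures commute with Cartesian products of affine varieties over an algebraically closed field, taking closures on both sides produces
$$\iota\!\left(\overline{\J_{1}^{\GL_{n_{1}}}}\times \overline{\J_{2}^{\GL_{n_{2}}}}\right)\;\subseteq\;\overline{(\J_{1}\oplus \J_{2})^{\GL_{n_{1}+n_{2}}}}.$$
The hypothesis $\J_{i}\to \J'_{i}$ is the statement $\J'_{i}\in\overline{\J_{i}^{\GL_{n_{i}}}}$, so evaluating at $(\J'_{1},\J'_{2})$ on the left yields $\J'_{1}\oplus \J'_{2}\in\overline{(\J_{1}\oplus \J_{2})^{\GL_{n_{1}+n_{2}}}}$, which is exactly $\J_{1}\oplus \J_{2}\to \J'_{1}\oplus \J'_{2}$. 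There is no real obstacle here: the only piece requiring actual checking is the equivariance identity displayed above, and the closure-of-products step is standard. The lemma is essentially functoriality of orbit closures under the direct-sum morphism $\iota$.
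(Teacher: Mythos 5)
Your argument is correct; note that the paper itself offers no proof of this lemma (it is imported verbatim from \cite{kashubamartin}), so the only benchmark is the standard argument, which is exactly what you give: the direct sum is a closed, $\GL_{n_{1}}\times\GL_{n_{2}}$-equivariant embedding of structure-constant spaces, and one pushes closures through it via $f(\overline{S})\subseteq\overline{f(S)}$. The one step worth an explicit sentence is the claim that Zariski closure commutes with products: the Zariski topology on $\Jor_{n_{1}}\times\Jor_{n_{2}}$ is strictly finer than the product topology, so $\overline{A\times B}=\overline{A}\times\overline{B}$ is not a purely formal fact; it does hold, e.g.\ by slicing (for fixed $b$ the map $a\mapsto(a,b)$ is continuous, giving $\overline{A}\times B\subseteq\overline{A\times B}$, and then one repeats in the second coordinate). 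With that remark added, the proof is complete and is the expected one.
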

\begin{example}
\label{exa:directsum} From the description of $\JorN_4$ in \cite{ancocheabermudes} it follows that
$\mathcal{\F}_{62}\to\mathcal{\F}_{63}\to\mathcal{\F}_{64}$ and $\mathcal{\F}_{62}\to\mathcal{\F}_{65}$, consequently
$\J_{4}\to\J_{3}\to\J_{2}$ and $\J_{4}\to\J_{1}$.
\end{example}
The other way to deform one algebra into another is to build a curve as
specified in Lemma \ref{lem:curva}.
\begin{lem}
\label{lem:curva}\cite{mazzolacocycles}Suppose that there exists a curve
$\gamma$ in $\Jor_{n}$ which generically lies in a subvariety $\mathcal{U}$
and which cuts $\J^{\G}$ in special point then $\J\in\mathcal{\overline{U}}$.\end{lem}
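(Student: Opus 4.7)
The plan is to combine two basic facts: the irreducibility of the curve $\gamma$ forces it into the closed set $\overline{\mathcal{U}}$, and the $\G$-invariance of $\mathcal{U}$ then allows me to transfer from a point of the orbit $\J^{\G}$ to $\J$ itself. Throughout, I interpret ``$\gamma$ generically lies in $\mathcal{U}$'' in the standard way: $\gamma\setminus\mathcal{U}$ consists of finitely many points (the ``special'' ones), so $\gamma\cap\mathcal{U}$ is a nonempty Zariski-open subset of $\gamma$.

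First I would establish that $\gamma\subseteq\overline{\mathcal{U}}$. As an algebraic curve in $\Jor_{n}$, $\gamma$ is irreducible, so any proper closed subset of $\gamma$ is a finite collection of points. Since $\overline{\mathcal{U}}$ is Zariski-closed in $\Jor_{n}$, the intersection $\gamma\cap\overline{\mathcal{U}}$ is closed in $\gamma$ and contains the nonempty open subset $\gamma\cap\mathcal{U}$; by irreducibility this intersection must equal all of $\gamma$.

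Next I would use the special point. By hypothesis there exists $p\in\gamma\cap\J^{\G}$, and the previous step yields $p\in\overline{\mathcal{U}}$. Writing $p=g\cdot\J$ for some $g\in\G$, and using that $\overline{\mathcal{U}}$ is $\G$-invariant (as the Zariski closure of the $\G$-stable subset $\mathcal{U}$), I obtain $\J=g^{-1}\cdot p\in g^{-1}\cdot\overline{\mathcal{U}}=\overline{\mathcal{U}}$, which is the desired conclusion. The argument contains no real obstacle; the only tacit assumption is the $\G$-invariance of $\mathcal{U}$, which in this paper is automatic because every subvariety that will be used in practice (an orbit closure, a union of orbits, or a locus cut out by $\G$-invariant numerical data such as $\dim\Aut$, $\dim\Ann$, $\dim\J^{2}$, $\dim\Z$, or $\nil$) is $\G$-stable.
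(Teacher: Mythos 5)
The paper does not prove this lemma at all: it is quoted from Mazzola's work \cite{mazzolacocycles} and used as a black box, so there is no in-paper argument to compare yours against. Your proof is correct and is the standard argument one would supply: irreducibility of the (parametrized, hence irreducible) curve forces $\gamma\subseteq\overline{\mathcal{U}}$ once a nonempty open piece of $\gamma$ lies in $\mathcal{U}$, and $\G$-invariance of $\overline{\mathcal{U}}$ transports the conclusion from the special point of the orbit to $\J$ itself. The two tacit hypotheses you flag are indeed satisfied in every application made in the paper: each curve is given by an explicit rational parametrization $t\mapsto\gamma(t)$ (so its closure is irreducible), and each subvariety $\mathcal{U}$ used is a $\G$-orbit, a union of $\G$-orbits such as $\mathcal{N}_{27}^{\#}$, or their closures, all of which are $\G$-stable; in fact in most applications $\gamma(0)$ is the structure tensor of $\J$ itself, so the invariance step is not even needed there.
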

\begin{example}
\label{exa:curva}Let $\gamma(t)=\left\{ e_{1}^{t},\!e_{2}^{t},\!e_{3}^{t},\!e_{4}^{t},\!e_{5}^{t}\right\} $
be a change of basis of $\J_{21}$ given by: $e_{1}^{t}=\sqrt{2}t^{2}n_{1}-\sqrt{2}t^{2}n_{2}+2t^{2}n_{3}-t^{2}n_{4}$,
$e_{2}^{t}=2\sqrt{2}t^{4}n_{2}+4t^{4}n_{4}$, $e_{3}^{t}=\sqrt{2}t^{3}(n_{1}+n_{2})$,
$e_{4}^{t}=4t^{6}(n_{4}+n_{5})$ and $e_{5}^{t}=8t^{8}n_{5}$. Thus 
$\gamma$ defines a curve in $\JorN_{5}$ which for any $t\neq 0$ belongs to 
$\J_{21}^{\G}$ (observe that $\det\gamma(t)=-2^{8}t^{23}$).
However, $e_{2}^{t}e_{3}^{t}=te_{4}^{t}$ and for $t\to 0$ we get the 
multiplication table of $\J_{18}$.
Thus, from Lemma \ref{lem:curva}, it follows that 
$\J_{18}\in\overline{\J_{21}^{\G}}$,
i.e., $\J_{21}\to\J_{18}$.
\end{example}
The non-existence of deformation for a given pair of algebras $\J_{1},\J_{2}\in\Jor_{n}$
will follow from the violation of one of the conditions below. 
\begin{thm}
\label{thm:conditions}If $\J_{1}\rightarrow\J_{2}$ then:
\begin{enumerate}
\item \label{enu:Aut}$\dim\Aut(\J_{1})<\dim\Aut(\J_{2})$; 
\item \label{enu:Ann}$\dim\Ann(\J_{1})\leq\dim\Ann(\J_{2})$; 
\item \label{enu:Pot}$\dim(\J_{1}^{m})\geq\dim(\J_{2}^{m})$, for any positive
integer $m$;
\item \label{enu:ZAssoc}$\dim\Z(\J_{1})\leq\dim\Z(\J_{2})$.
\end{enumerate}
\end{thm}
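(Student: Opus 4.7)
The plan is to derive all four conditions from the same two underlying principles: the orbit-stabilizer theorem for the $\G$-action on $\Jor_n$, and the semi-continuity of certain integer-valued functions under Zariski specialization. Throughout I would let $\J_2^{\G}\subseteq\overline{\J_1^{\G}}$ be the defining hypothesis of the deformation $\J_1\to\J_2$.

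For (\ref{enu:Aut}), I would apply the orbit-stabilizer theorem to the morphism $\G\to\Jor_n$, $g\mapsto g\cdot\J$, whose fiber over $\J$ is exactly $\Aut(\J)$. This gives $\dim\J^{\G}=\dim\G-\dim\Aut(\J)$ for every $\J$. The orbit $\J_1^{\G}$ is locally closed and constructible, and its boundary $\overline{\J_1^{\G}}\setminus\J_1^{\G}$ has strictly smaller dimension than $\J_1^{\G}$ itself. Since $\J_1\not\simeq\J_2$ forces $\J_2^{\G}$ to lie in that boundary, we obtain $\dim\J_2^{\G}<\dim\J_1^{\G}$, which translates into the strict inequality $\dim\Aut(\J_1)<\dim\Aut(\J_2)$.

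For (\ref{enu:Ann}) and (\ref{enu:ZAssoc}) the key observation is that both $\Ann(\J)$ and $\Z(\J)$ are cut out of $\J$ by finite systems of linear equations in the coefficients of $a=\sum a_i e_i$ whose matrices depend polynomially on the structure constants $(c_{ij}^k)$. Hence for each fixed integer $r$, the locus $\{\J\in\Jor_n:\dim\Ann(\J)\ge r\}$ (respectively for $\Z$) is defined by the vanishing of certain minors and is therefore Zariski closed. Since $\J_1^{\G}$ lies inside the closed set $\{\dim\Ann\ge\dim\Ann(\J_1)\}$, so does its closure, and specializing at $\J_2$ gives (\ref{enu:Ann}); the same argument with the associator-based linear system replacing $a\J=0$ yields (\ref{enu:ZAssoc}).

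For (\ref{enu:Pot}) I would use the complementary lower semi-continuity of the dimension of a family of linear subspaces. The $m$-th power $\J^m$ is the image of a fixed finite list of iterated-product maps $\J^{\otimes k}\to\J$ for $k\le m$, and its dimension equals the rank of a matrix of linear combinations of structure constants. The rank is lower semi-continuous, so $\{\J:\dim\J^m\le s\}$ is closed; applying this to $s=\dim\J_2^m$ and to the closure of $\J_1^{\G}$ gives $\dim\J_1^m\ge\dim\J_2^m$. The main obstacle, such as it is, is simply to set up the rank-of-a-matrix formulation cleanly for $\J^m$ (unwinding the recursive definition $\J^m=\sum_{i+j=m}\J^i\cdot\J^j$), but all four conclusions then reduce to the single principle that rank is lower semi-continuous in the Zariski topology. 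All of these facts are standard and, as noted in the text, are already recorded in \cite{kashubamartin} and \cite{mazzolacocycles}, so I would present the argument in compact form and cite those references.
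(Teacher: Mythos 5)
Your proposal is correct and follows essentially the same route as the paper: for item (\ref{enu:ZAssoc}) you use exactly the paper's argument (the associator equations give a linear system whose matrix depends polynomially on the structure constants, so $\{\J:\dim\Z(\J)\geq s\}$ is cut out by vanishing minors and is Zariski closed), and for items (\ref{enu:Aut})--(\ref{enu:Pot}) you supply the standard orbit-dimension and semicontinuity arguments that the paper simply delegates to \cite{kashubamartin}. The only nitpick is the threshold in (\ref{enu:Pot}): one should apply closedness of $\{\J:\dim\J^{m}\leq s\}$ with $s=\dim\J_{1}^{m}$ (the whole orbit $\J_{1}^{\G}$, hence its closure and thus $\J_{2}$, lies in that set), but this is a trivial rewording and does not affect correctness.
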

\begin{proof}
For the proof of \eqref{enu:Aut} to \eqref{enu:Pot} we refer to
\cite{kashubamartin}. To see \eqref{enu:ZAssoc}, it suffices
to prove that the sets $\{\J\in\Jor_{n}\mid\dim\Z(\J)\geq s\}$ are
Zariski-closed in $\Jor_{n}$ for any $s\in\mathbb{N}$. Let ${e_{1},e_{2},\dots,e_{n}}$
be a basis for $\J\in\Jor_{n}$ and $a=\sum_{i=1}^{n}\alpha_{i}e_{i}\in\Z(\J)$.
Substituting $a$ in each of the $3n^{2}$ equations $(a,e_{i},e_{j})=0$,
$(e_{i},a,e_{j})=0$ and $(e_{i},e_{j},a)=0$ for $i,j=1,\dots,n$,
we obtain a system with $3n^{3}$ equations having $\alpha_{i}$ as
unknowns. Then $\dim\Z(\J)=n-\operatorname{rank}(P_{3n^{3},n})$,
where $P_{3n^{3},n}$ is the matrix of the system, thus the condition
$\dim\Z(\J)\geq s$ is equivalent to the fact that all $(n-s+1)$-minors
of $P_{3n^{3},n}$ vanish, which is defined by a finite number
of polynomial identities in the structure constants.
\end{proof}
As direct consequences of \eqref{enu:Pot} and \eqref{enu:ZAssoc},
respectively, we have:
\begin{cor}
\label{cor:assoc}\mbox{}
\begin{enumerate}
\item If $\J_{1}$ and $\J_{2}$ are nilpotent Jordan algebras such as $\J_{1}\to\J_{2}$
then $\nil(\J_{1})\geq\nil(\J_{2})$.
\item Any deformation of a non-associative algebra is a non-associative
algebra. 
\end{enumerate}
\end{cor}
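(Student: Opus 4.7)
The plan is to obtain both items directly from parts~(3) and~(4) of Theorem~\ref{thm:conditions}; no fresh ingredients are needed, which is why the paper presents the statement as a corollary of that theorem.

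For the first item I would fix $m=\nil(\J_{1})$, so that by definition $\J_{1}^{m}=0$ and in particular $\dim(\J_{1}^{m})=0$. Applying part~(3) of Theorem~\ref{thm:conditions} at this value of $m$ yields $\dim(\J_{2}^{m})\leq\dim(\J_{1}^{m})=0$, which forces $\J_{2}^{m}=0$, and hence $\nil(\J_{2})\leq m=\nil(\J_{1})$. This single inequality is the whole content of item~(1).

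For the second item the cleanest route is the contrapositive: assume $\J_{2}$ is non-associative and $\J_{1}\to\J_{2}$, and deduce that $\J_{1}$ is non-associative as well. The key observation is that for a commutative algebra $\J$ of dimension $n$, the associative center $\Z(\J)$ is a subspace of $\J$, and the condition $\Z(\J)=\J$ (equivalently $\dim\Z(\J)=n$) is by the definition $\Z(\J)=\{a\in\J\mid(a,\J,\J)=(\J,a,\J)=(\J,\J,a)=0\}$ exactly the vanishing of all associators, i.e.\ associativity of $\J$. So the hypothesis that $\J_{2}$ is non-associative amounts to $\dim\Z(\J_{2})<n$, and part~(4) of Theorem~\ref{thm:conditions} then gives $\dim\Z(\J_{1})\leq\dim\Z(\J_{2})<n$, whence $\Z(\J_{1})\subsetneq\J_{1}$ and $\J_{1}$ is non-associative.

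There is no real obstacle to overcome: the two items follow by plugging the extremal values $\dim(\J_{1}^{m})=0$ and $\dim\Z(\J_{2})<n$ into the inequalities already proved in Theorem~\ref{thm:conditions}. The only spot where a reader might pause is the elementary equivalence ``$\dim\Z(\J)=n$ iff $\J$ is associative,'' which is immediate from the definition of $\Z(\J)$ recalled above.
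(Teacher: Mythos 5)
Your proposal is correct and follows exactly the route the paper intends: the paper gives no separate proof beyond declaring the two items ``direct consequences'' of items (3) and (4) of Theorem~\ref{thm:conditions}, and your instantiations ($m=\nil(\J_1)$ for the power inequality, and $\dim\Z(\J)=n$ iff $\J$ is associative for the associative-center inequality) are precisely the intended arguments. Both directions and quantifiers are handled correctly, so nothing is missing.
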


\section{Deformations of infinite families of algebras\label{sec:examples}}

Whenever the class of algebras consists of only finitely many non-isomor\-phic 
algebras, the irreducible components are completely determined by rigid 
algebras, 
see $\Jor_n$, $n\leq 4$. When we have infinite number of non-isomorphic 
algebras it is not necessarily that any irreducible component contains 
an open orbit. In \cite{flanigan}, Flanigan showed that there exists an infinite 
family of nilpotent algebras of dimension three such that the closure of a 
union 
of their orbits is an irreducible component in $\AssocN_3$, while it carries no 
open orbit. In the next section, we will show that such irreducible component 
exists in $\JorN_5$.
In general, the behaviour of infinite families under deformation is of 
particular interest and in what follows we will look into examples of 
deformations in $\JorN_5$ which involve families.
First, to establish the notation denote by
$$\mathcal{N}_{i}^{\#}=\bigcup\limits_{\alpha\in T}\left(\J_i^{\alpha}\right)^{\G},$$
the union of the orbits of algebras in the family $\J^{\alpha}_i$, $\alpha\in 
T$ where $T$ is some subset of $\ka$.

\subsection{Infinite family is dominated by a single 
algebra}\label{exa:22->15}The infinite family $\J_{15}^{\alpha},\ 
\alpha\in\ka$, 
is dominated by algebra $\J_{22}$. To see that, for any $\alpha\in\ka$ consider a change of basis of $\J_{22}$
parametrized by $\gamma_{\alpha}(t)$: $e_{1}^{t}=t^{2}n_{1}-\frac{\alpha}{3}n_{2}+tn_{3}$,
$e_{2}^{t}=t^{4}n_{2}+(1-\frac{2\alpha}{3})t^{2}n_{4}+\frac{\alpha^{2}}{9}n_{5}
$,
$e_{3}^{t}=t^{6}n_{4}-(-1+\alpha)t^{4}n_{5}$, $e_{4}^{t}=t^{4}n_{2}-t^{5}n_{3}+\frac{\alpha t^{2}}{3}n_{4}$
and $e_{5}^{t}=t^{8}n_{5}$. Observe that $\det(\gamma_{\alpha})=t^{25}$ thus, 
for any $t\neq0$, the curve $\gamma_{\alpha}(t)$
belongs to $\J_{22}^{\G}$. Since $\left(e_{4}^{t}\right)^{2}=t^{4}e_{3}^{t}+\alpha e_{5}^{t}$ it follows that $\gamma_{\alpha}(0)$ gives the multiplication table 
of $\J_{15}^{\alpha}$. Therefore 
$\mathcal{N}_{15}^{\#}\subset\overline{\J_{22}^{\G}}$.

\subsection{Infinite family is a deformation of a single 
algebra\label{exa:deformation-of-family}}

The deformation from the family $\J_{23}^{\beta},\ \beta\in\ka$ to $\J_{41}$ is quite remarkable, since no single member of
$\J_{23}^{\beta}$ can dominate $\J_{41}$. Indeed $\dim\Aut(\J_{41})=\dim\Aut(\J_{23}^{\beta})$
for all $\beta\in\ka$. Consider
the ``variable'' change of basis of $\J_{23}^{\beta(t)}$, where
$\beta(t)=\frac{1}{t}$, namely: $e_{1}^{t}=tn_{1}-\frac{1}{2}n_{2}$,
$e_{2}^{t}=-\frac{1}{2}n_{2}$, $e_{3}^{t}=-\frac{t}{2}n_{3}$, 
$e_{4}^{t}=\frac{t}{4}n_{5}$
and $e_{5}^{t}=t^{2}n_{3}-tn_{4}$. So the curve $\gamma(t)$ lies transversely 
to the orbits of $\J_{23}^{\beta}$, meaning that, for any $t\neq 0$, 
$\gamma(t)\subset\mathcal{N}_{23}^{\#}$
and cuts $\J_{41}^{\G}$ in $t=0$. Thus $\J_{41}\in\overline{\mathcal{N}_{23}^{\#}}$. We
will abuse the notation and will denote this fact by $\J_{23}^{\beta}\to\J_{41}$. 

Observe that in the same way we obtain that any algebra $\J^{\alpha}_{k}$ of a family 
can be viewed as a deformation of $\mathcal{N}_{k}^{\#}$. We use the basis of $\J^{\alpha}_{k}$ from Table 
\ref{tab:5dnilponaoassoc}, then for any fixed $\alpha_0$ $\gamma(\alpha)$ defines the multiplication 
in $\J^{\alpha_0}_{k}$ while for all other values of $\alpha$ algebra belongs to $\mathcal{N}_{k}^{\#}$.


In a rather  different way, we obtain the dominance of
$\J_{39}$ by the family $\J_{15}^{\alpha},\ \alpha\in\ka$. In this
case $\J_{15}^{1}$ (a single member of $\J_{15}^{\alpha},\ \alpha\in\ka$)
dominates $\J_{39}$. Consider the curve: $e_{1}^{t}=tn_{1}$, $e_{2}^{t}=n_{2}+(-1+t)n_{4}$,
$e_{3}^{t}=tn_{3}$, $e_{4}^{t}=t^{2}n_{5}$ and $e_{5}^{t}=t^{2}n_{2}$.
For any $t\neq0,1$, the curve lies in $\left(\J_{15}^{1}\right)^{\G}$
and cuts $\J_{39}^{\G}$ in $t=0$ and $\epsilon_{5}^{\G}$ in $t=1$,
respectively. Then we have 
$$\J_{39}\in\overline{\left(\J_{15}^{1}\right)^{\G}}\subseteq\bigcup_{\alpha\in\ka}\overline{\left(\J_{15}^{\alpha}\right)^{\G}}\subseteq\overline{\mathcal{N}_{15}^{\#}}$$
and, therefore, $\J_{15}^{\alpha}\to\J_{39}$.

\subsection{Deformations between families}\label{deformation-between-families}
The family $\J_{27}^{\varepsilon,\phi}$, $\varepsilon,\phi\in\ka\mbox{ and \ensuremath{\varepsilon\phi\neq1}}$,
dominates the family $\J_{26}^{\delta}$, $\delta\in\ka^{*}$. Again no single 
member of  $\J_{26}^{\delta}$ deforms into any single member of  $\J_{27}^{\varepsilon,\phi}$
since $\dim\Aut(\J_{26}^{\delta})=\dim\Aut(\J_{27}^{\varepsilon,\phi})$,
for all $\delta,\varepsilon,\phi\in\ka$.  For any fixed $\delta\in\ka^{*}$,
we construct a curve $\gamma_{\delta}(t)$ which lie transversely
to the orbits of $\J_{27}^{\varepsilon,\phi}$. Consider the ``variable''
change of basis of $\J_{27}^{\varepsilon(t),\phi(t)}$, where $(\varepsilon(t),\phi(t))=(t\delta,\frac{1}{t^{2}})$,
defined as: $e_{1}^{t}=t\delta n_{1}$, $e_{2}^{t}=\delta n_{2}$,
$e_{3}^{t}=t^{2}\delta^{2}n_{3}$, $e_{4}^{t}=\delta^{2}n_{4}$
and $e_{5}^{t}=t\delta^{3}n_{5}$. For any $t\neq0$, the curve
lies in $\mathcal{N}_{27}^{\#}$ and, since $e_{2}^{t}e_{3}^{t}=te_{5}^{t}$,
cuts $\left(\J_{26}^{\delta}\right)^{\G}$ in $t=0$. Thus 
$\J_{26}^{\delta}\in\overline{\mathcal{N}_{27}^{\#}}$,
for any $\delta\in\ka^{*}$.

Differently, we obtain the dominance of the family
$\J_{24}^{\gamma}$, $\gamma\in\ka^{*}-\{1,-\frac{1}{2}\}$ by the
family $\J_{23}^{\beta},\ \beta\in\ka$. In this case, fixing $\gamma_{0}\in\ka^{*}-\{1,-\frac{1}{2}\}$,
the algebra $\J_{24}^{\gamma_{0}}$ deforms into $\J_{23}^{\beta_{0}}$, where $\beta_{0}=\frac{(1-2\gamma_{0})i}{\sqrt{\gamma_{0}}}$. 
Define the curve: $e_{1}^{t}=-\frac{i}{\sqrt{\gamma_{0}}}n_{1}+n_{2}$,
$e_{2}^{t}=tn_{2}$, $e_{3}^{t}=-\frac{1}{\gamma_{0}}n_{3}-\frac{2i}{\sqrt{\gamma_{0}}}n_{4}$,
$e_{4}^{t}=-\frac{it}{\sqrt{\gamma_{0}}}n_{4}$ and $e_{5}^{t}=-\frac{it}{\sqrt{\gamma_{0}^{3}}}n_{5}$.
For any $t\neq0$, the curve lies in the $\G$-orbit of $\J_{23}^{\beta_{0}}$
and, since $e_{2}^{t}e_{4}^{t}=t\gamma_{0}e_{5}^{t}$, cuts $\left(\J_{24}^{\gamma_{0}}\right)^{\G}$
in $t=0$. Therefore $$\J_{24}^{\gamma_{0}}\in\overline{\left(\J_{23}^{\beta_{0}}\right)^{\G}}
\subseteq\overline{\mathcal{N}_{23}^{\#}}$$
consequently $\J_{23}^{\beta}\to\J_{24}^{\gamma}$, for all $\gamma\in\ka^{*}-\{1,-\frac{1}{2}\}$.

\section{The algebraic variety $\protect\JorN_{5}$\label{sec:JorN5}}


In this section, we will describe the algebraic variety of five-dimensional 
Jordan algebras $\JorN_{5}$.
First we show that any structure can be deformed either into one of the given 
four algebras or into  the algebras in family $\J_{27}^{\varepsilon,\phi}$.
The main tool of the proof is Lemma \ref{lem:curva}, to define a curve 
$\gamma(t)$ we use notations and basis of Table \ref{tab:5dnilponaoassoc}.

\begin{lem}
\label{lem:notrigid}Every structure $\J\in\JorN_{5}$ belongs to
$\overline{\epsilon_{1}^{\G}}$, $\overline{\mathcal{\J}_{21}^{\G}}$,
$\overline{\mathcal{\J}_{22}^{\G}}$, $\overline{\mathcal{\J}_{40}^{\G}}$
or $\overline{\mathcal{N}_{27}^{\#}}$. \end{lem}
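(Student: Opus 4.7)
The plan is to verify, for each algebra listed in Tables \ref{tab:5dnilpoassoc} and \ref{tab:5dnilponaoassoc} that is not itself one of the five candidates, that it lies in the closure of the orbit of a specific one of them. Which candidate to try is strongly constrained by Theorem \ref{thm:conditions} and Corollary \ref{cor:assoc}: a dominator must have strictly smaller $\dim\Aut$, no larger $\dim\Ann$ and $\dim\Z$, at least as large $\dim\J^m$ for every $m$, and in the non-associative case it must itself be non-associative. A quick inspection of the tables shows that $\epsilon_1$ is the unique associative algebra of smallest $\dim\Aut$, that $\J_{21},\J_{22},\J_{40}$ are the non-associative algebras of smallest $\dim\Aut$ within their strata of invariants, and that the two-parameter family $\J_{27}^{\varepsilon,\phi}$ is the natural ``top'' family, since Section \ref{sec:examples} already exhibits deformations of the other infinite families into it or into one of the rigid candidates.

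First I would dispose of the associative algebras. The decomposable ones $\epsilon_5,\epsilon_6,\epsilon_{14},\epsilon_{15},\epsilon_{18},\epsilon_{19},\epsilon_{20},\epsilon_{22},\epsilon_{23},\epsilon_{24},\epsilon_{25}$ are handled by combining Lemma \ref{lem:directsum} with the known structure of $\JorN_n$ for $n\le 4$ from \cite{ancocheabermudes} and \cite{martin}. For each remaining $\epsilon_i$ I would exhibit an explicit one-parameter curve $\gamma(t)\subset\epsilon_1^{\G}$, constructed in the spirit of Example \ref{exa:curva}, whose $t\to 0$ limit recovers the multiplication table of $\epsilon_i$; the consistency of the proposal with Theorem \ref{thm:conditions} has already been checked when selecting the candidate.

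For the non-associative algebras, Corollary \ref{cor:assoc} restricts attention to the four non-associative candidates. The decomposable algebras $\J_1,\J_2,\J_3,\J_4$ reduce via Lemma \ref{lem:directsum} to deformations of four-dimensional algebras, so by Example \ref{exa:directsum} it suffices to place $\J_4$ itself in one of the five closures. The families $\J_{15}^{\alpha}$, $\J_{23}^{\beta}$, $\J_{24}^{\gamma}$, $\J_{26}^{\delta}$, together with $\J_{41}$, are already accounted for by Sections \ref{exa:22->15}, \ref{exa:deformation-of-family} and \ref{deformation-between-families}. For each remaining algebra I would pair it with a suitable candidate by comparing the invariants, and write down an explicit curve inside the candidate's orbit that degenerates at $t=0$ to the target; for targets dominated by $\J_{27}^{\varepsilon,\phi}$ I would use the variable-basis technique of Section \ref{deformation-between-families}, letting $(\varepsilon(t),\phi(t))$ vary with $t$ so that the curve lies in $\mathcal{N}_{27}^{\#}$ for $t\ne 0$ and meets the target orbit at $t=0$.

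The main obstacle is not conceptual but a mixture of bookkeeping and parameter-matching: one must produce explicit deforming curves for the roughly forty non-candidate algebras, each time verifying non-vanishing of $\det\gamma(t)$ away from $t=0$ and the correct limiting multiplication table. The subtler step concerns the infinite families, where the target (or the dominator) is not a single orbit but a union of orbits, so one really needs the variable-parameter trick to guarantee that the entire family lies in the claimed closure rather than only individual members. Once the right curves are written down, the verifications are routine polynomial identities and determinant checks.
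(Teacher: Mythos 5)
Your overall strategy for the non-associative algebras coincides with the paper's: restrict the possible dominators via Theorem \ref{thm:conditions} and Corollary \ref{cor:assoc}, dispatch the decomposables with Lemma \ref{lem:directsum}, and connect everything else to $\J_{21}$, $\J_{22}$, $\J_{40}$ or $\mathcal{N}_{27}^{\#}$ by chains of explicit degeneration curves (the paper's proof is essentially a long list of such curves, e.g. $\J_{21}\to\J_{20}\to\J_{19}\to\J_{12}\to\J_{11}$, $\J_{22}\to\J_{33}\to\J_{32}\to\J_{31}$, $\J_{40}\to\J_{38}\to\J_{36}$). For the associative algebras, however, the paper takes a much shorter route that you miss: it invokes Mazzola's theorem that the subvariety of commutative associative nilpotent algebras in dimension $5$ is irreducible, together with the openness of $\epsilon_{1}^{\G}$ in it, which immediately gives $\epsilon_{i}\in\overline{\epsilon_{1}^{\G}}$ for all $i$. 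Your plan of writing down some two dozen explicit curves inside $\epsilon_{1}^{\G}$ would also work in principle (possibly after allowing chains rather than direct degenerations), but it replaces a one-line citation by a substantial computation.

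There is also one concrete gap in your bookkeeping. You assert that the families $\J_{23}^{\beta}$, $\J_{24}^{\gamma}$ and the algebra $\J_{41}$ are ``already accounted for'' by Section \ref{sec:examples}. They are not: that section only proves deformations \emph{out of} $\J_{23}^{\beta}$ (namely $\J_{23}^{\beta}\to\J_{41}$ and $\J_{23}^{\beta}\to\J_{24}^{\gamma}$), i.e. it places $\J_{41}$ and the $\J_{24}^{\gamma}$ only in $\overline{\mathcal{N}_{23}^{\#}}$, which is not one of the five claimed closures. The missing link is $\mathcal{N}_{23}^{\#}\subseteq\overline{\mathcal{N}_{27}^{\#}}$, which the paper establishes inside the proof of the lemma by a new variable-parameter curve in $\J_{27}^{\varepsilon(t),\phi(t)}$ with $(\varepsilon(t),\phi(t))=\bigl(-1-t\beta_{0},\tfrac{-1-t^{2}-t\beta_{0}}{(1+t\beta_{0})^{2}}\bigr)$ degenerating to $\J_{23}^{\beta_{0}}$; such a curve is unavoidable because no single member of $\J_{27}^{\varepsilon,\phi}$ can dominate any $\J_{23}^{\beta_{0}}$ (their automorphism groups have equal dimension). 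You name the right tool (the variable-basis technique), but as written your plan leaves the entire family $\J_{23}^{\beta}$, and with it $\J_{41}$ and $\J_{24}^{\gamma}$, unplaced. Finally, be aware that your proposal defers all of the explicit curve constructions, and those constructions are essentially the whole content of the paper's proof.
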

\begin{proof}

All associative structures, namely $\epsilon_{i}$ for $i=1,\dots,25$,
belong to $\overline{\epsilon_{1}^{\G}}$. In fact, G. Mazzola in
\cite{mazzolacocycles} showed that the subvariety of associative algebras
is an irreducible subvariety in $\JorN_{5}$ and  that the orbit 
$\epsilon_{1}^{\G}$
is open, thus its closure contains all associative structure. From Corollary
\ref{cor:assoc} it follows that there is no non-associative structure which deforms
into $\epsilon_{1}$.
 
All algebras $\J_{i}$ for $i=1,\dots,20$ except for the family 
$\J^{\alpha}_{15}$, are dominated
by $\J_{21}$. Example \ref{exa:curva} gives $\J_{21}\to\J_{18}$.
For the deformation $\J_{18}\rightarrow\J_{17}$, consider a family of algebras with the following basis
\begin{equation}\label{J18-example}
\begin{array}{l}
e_{1}^{t}=-t^{2}n_{1}+\frac{1}{2}n_{2}-tn_{3}+\frac{1}{8t^{2}}n_{4},\\
e_{2}^{t}=-t^{8}n_{2}-\frac{3t^{6}}{2}n_{4},\\ 
e_{3}^{t}=t^{4}n_{2}-t^{5}n_{3}+\frac{t^{2}}{2}n_{4},\\
e_{4}^{t}=t^{10}n_{4}+t^{8}n_{5},\\
e_{5}^{t}=-t^{12}n_{5}.
\end{array}
\end{equation}
Here $\{n_{1},\dots,n_{5}\}$ is the basis of $\J_{18}$ given in Table \ref{tab:5dnilponaoassoc}, $t\in\ka$ and therefore
this family can be viewed as a curve $\gamma(t)\in \JorN_{5}$. For any fixed 
$t\neq 0$ the change of basis \eqref{J18-example}
determines algebras isomorphic to $\J_{18}$, while for $t=0$ one obtains $\J_{17}$ (indeed, $\left(e_{2}^{t}\right)^{2}=-t^{4}e_{5}^{t}$).
Analogously, the change of basis $e_{1}^{t}=n_{1}$, $e_{2}^{t}=t^{2}n_{2}$,
$e_{3}^{t}=tn_{3}$, $e_{4}^{t}=t^{2}n_{4}$ and $e_{5}^{t}=t^{2}n_{5}$
of $\J_{17}$ does the trick for $\J_{17}\rightarrow\J_{16}$. 
Applying Lemma \ref{lem:curva} to given transformations
we obtain the following deformations:
$$
\begin{array}{c|c|c|c|c}
\J_{21}\rightarrow\J_{20} & \J_{20}\rightarrow\J_{19} & \J_{19}\rightarrow\J_{12}& \J_{12}\to\J_{11} & \J_{20}\to\J_{8}\\
e_{1}^{t}=\frac{1}{t^{3}}n_{1} & e_{1}^{t}=t^{2}n_{1} & e_{1}^{t}=n_{1}-\frac{1}{2}n_{2}+n_{3} & e_{1}^{t}=n_{1} & e_{1}^{t}=tn_{1}\\
e_{2}^{t}=\frac{1}{t}n_{2} & e_{2}^{t}=n_{2}& e_{2}^{t}=tn_{3}+n_{4} & e_{2}^{t}=t^{2}n_{2} & e_{2}^{t}=n_{2}\\
e_{3}^{t}=\frac{1}{t^{2}}n_{3} & e_{3}^{t}=tn_{3} & e_{3}^{t}=tn_{4}+n_{5}& e_{3}^{t}=t^{2}n_{3} & e_{3}^{t}=tn_{4}\\
e_{4}^{t}=\frac{1}{t^{4}}n_{4} &  e_{4}^{t}=t^{2}n_{4} & e_{4}^{t}=it(n_{2}-n_{3})& e_{4}^{t}=tn_{4} & e_{4}^{t}=tn_{3}+t^{2}n_{4} \\
e_{5}^{t}=\frac{1}{t^{6}}n_{5} & e_{5}^{t}=t^{3}n_{5} & e_{5}^{t}=tn_{5}& e_{5}^{t}=t^{2}n_{5} & e_{5}^{t}=t^{2}n_{5} \\
\end{array}
$$

To obtain $\J_{21}\to\J_{9}$ consider the following change of basis
$e_{1}^{t}=-\frac{1}{\sqrt{2}t}n_{1}$, $e_{2}^{t}=\frac{1}{\sqrt{2}t}n_{2}$,
$e_{3}^{t}=-\frac{1}{2t^{2}}n_{4}$, $e_{4}^{t}=-\frac{1}{\sqrt{2}}n_{1}+\frac{1}{\sqrt{2}}n_{2}-n_{3}+\left(\frac{1}{2}-\frac{1}{4t}\right)n_{4}$
and $e_{5}^{t}=\frac{1}{2t^{2}}n_{5}$ of $\J_{21}$. 
Deformations $\J_{4}\to\J_{3}\to\J_{2}$ and $\J_{4}\to\J_{1}$ follow from Example \ref{exa:directsum}. 
Suppose $\{n_{1},\dots,n_{5}\}$ is the basis of $\J_{9}$, then $e_{1}^{t}=tn_{1}-\frac{1}{2}n_{3}+t^{2}n_{4}$,
$e_{2}^{t}=tn_{2}-\frac{1}{2}n_{3}+t^{2}n_{4}$, $e_{3}^{t}=t^{2}(n_{3}-n_{5})$,
$e_{4}^{t}=\frac{t}{2}n_{3}+t^{3}n_{4}$ and $e_{5}^{t}=t^{4}n_{5}$ for any
$t\neq0$ defines algebras which are isomorphic to $\J_{9}$. Meanwhile 
$e_{3}^{t}e_{4}^{t}=te_{5}^{t}$ thus, when $t$ tends to zero, we get the 
structure constants of
algebra $\J_{10}$. The basis $e_{1}^{t}=n_{1}-t^{2}n_{2}+\sqrt{2}tn_{4}$,
$e_{2}^{t}=-t^{2}n_{1}+n_{3}-\sqrt{2}tn_{4}$, $e_{3}^{t}=t^{4}n_{3}-4t^{2}n_{5}$,
$e_{4}^{t}=\frac{t^{4}}{2}n_{3}-\frac{t^{5}}{2\sqrt{2}}n_{4}$ and
$e_{5}^{t}=-t^{6}n_{5}$ of $\J_{10}$ does the trick for $\J_{10}\rightarrow\J_{13}$. To complete the description of orbits inside $\J_{21}^{\G}$
use Lemma \ref{lem:curva} for the following cases:
$$
\begin{array}{c|c|c|c|c}
\J_{8}\to\J_{7}& \J_{7}\to\J_{4}& \J_{13}\to\J_{14} &\J_{10}\to\J_{5}& \J_{5}\to\J_{6}\\
e_{1}^{t}=n_{2}& e_{1}^{t}=n_{1}+n_{4}& e_{1}^{t}=n_{2} & e_{1}^{t}=\frac{it}{\sqrt{2}}(n_{1}-n_{2})& e_{1}^{t}=n_{1}+n_{4}\\
e_{2}^{t}=tn_{1}& e_{2}^{t}=n_{2}+n_{4}& e_{2}^{t}=t(-n_{1}+n_{2})& e_{2}^{t}=t^{2}n_{3}& e_{2}^{t}=n_{2}+n_{5} \\
e_{3}^{t}=tn_{3}& e_{3}^{t}=n_{3}& e_{3}^{t}=-tn_{3}& e_{3}^{t}=\frac{t^{2}}{2}(n_{1}+n_{2})& e_{3}^{t}=tn_{3}\\
e_{4}^{t}=n_{4}& e_{4}^{t}=n_{5}& e_{4}^{t}=n_{4}& e_{4}^{t}=t^{2}n_{4} & e_{4}^{t}=tn_{4}\\
e_{5}^{t}=tn_{5}& e_{5}^{t}=tn_{1}& e_{5}^{t}=-tn_{5}& e_{5}^{t}=t^{4}n_{5}& e_{5}^{t}=tn_{5}\\
\end{array}
$$

Any algebra $\J_{28}$, $\J_{30}\,$-$\J_{34}$, $\J_{39}$ as well 
as the family $\J_{15}^{\alpha}$, $\alpha\in\ka$ deforms into algebra 
$\J_{22}$. 
Deformations  $\J_{22}\to\J_{15}^{\alpha}\to\J_{39}$ follow from Sections \ref{exa:22->15} and \ref{exa:deformation-of-family}. 
To show that $\J_{39}\to\J_{34}$ we use again Lemma \ref{lem:curva} applying it to the curve $\gamma(t):$ $e_{1}^{t}=n_{1}$, $e_{2}^{t}=tn_{2}$,
$e_{3}^{t}=tn_{3}$, $e_{4}^{t}=tn_{4}$ and $e_{5}^{t}=n_{5}$ of
$\J_{39}$. The deformation $\J_{22}\to\J_{33}$ is given by the change
of basis $e_{1}^{t}=t^{2}n_{1}+n_{3}$, $e_{2}^{t}=t^{4}n_{2}+n_{4}$,
$e_{3}^{t}=tn_{2}-t^{3}n_{3}$, $e_{4}^{t}=t^{5}n_{5}$ and $e_{5}^{t}=t^{6}n_{4}+t^{2}n_{5}$
of $\J_{22}$. The curve $\gamma(t):$
$$ 
\begin{array}{l}
e_{1}^{t}=t^{2}n_{1}+n_{2}+tn_{3},\\ 
e_{2}^{t}=t^{4}n_{2}+2tn_{4}+3t^{2}n_{5},\\
e_{3}^{t}=\frac{t^{4}}{3}n_{2}+\frac{2t^{5}}{3}n_{3},\\ 
e_{4}^{t}=\frac{2t^{9}}{3}n_{4},\\
e_{5}^{t}=t^{5}n_{4}+t^{6}n_{5}
\end{array}
$$
belongs to $\J_{33}^{\G}$ for any
$t\neq0$ and intersects $\J_{32}$ when $t=0$, thus $\J_{33}\to\J_{32}$. The new basis 
$e_{1}^{t}=n_{1}$, $e_{2}^{t}=n_{2}$, $e_{3}^{t}=t(n_{2}-n_{3})$,
$e_{4}^{t}=-tn_{4}$ and $e_{5}^{t}=n_{5}$ of $\J_{32}$ for any $t\neq 0$ defines algebra isomorphic to $\J_{32}$.
Observe that $\left(e_{3}^{t}\right)^{2}=2te_{4}^{t}$, thus we get the structure
$\J_{31}$ when $t$ tends to zero, i.e. $\J_{32}\to\J_{31}$. The
changes of basis $e_{1}^{t}=t^{2}n_{1}+\frac{1}{9}n_{2}+\frac{t}{3}n_{3}$,
$e_{2}^{t}=t^{4}n_{2}+\frac{2t}{27}n_{4}+\frac{t^{2}}{3}n_{5}$, $e_{3}^{t}=\frac{1}{12}n_{2}+\frac{t}{2}n_{3}$,
$e_{4}^{t}=\frac{t^{5}}{2}n_{4}$ and $e_{5}^{t}=\frac{t}{12}n_{4}+\frac{t^{2}}{4}n_{5}$
of $\J_{33}$ and $e_{1}^{t}=t(n_{1}+n_{2})$, $e_{2}^{t}=t^{2}n_{2}$,
$e_{3}^{t}=\frac{1}{2}n_{2}+n_{3}$, $e_{4}^{t}=t^{2}n_{4}$ and $e_{5}^{t}=n_{4}+n_{5}$
of $\J_{30}$ give $\J_{33}\to\J_{30}$ and $\J_{30}\to\J_{28}$, respectively.

Next, $\J_{40}\to\J_{38}\to\J_{36}$. Indeed, we apply Lemma \ref{lem:curva}
to the change of basis $e_{1}^{t}=\frac{1}{t^{2}}n_{1}$, $e_{2}^{t}=\frac{1}{t}n_{2}$,
$e_{3}^{t}=\frac{1}{t^{3}}n_{3}$, $e_{4}^{t}=\frac{1}{t^{5}}n_{4}$
and $e_{5}^{t}=\frac{1}{t^{4}}n_{5}$  of $\J_{40}$ and  $e_{1}^{t}=n_{2}$, $e_{2}^{t}=tn_{1}$,
$e_{3}^{t}=tn_{3}$, $e_{4}^{t}=tn_{5}$ and $e_{5}^{t}=t^{2}n_{4}$,
of  $\J_{38}$, respectively.

Finally, the orbits of remaining structures, i.e. families $\J_{23}^{\beta}$, $\J_{24}^{\gamma}$, $\J_{26}^{\delta}$ as well as algebras $\J_{25}$, $\J_{35}$, $\J_{41}$,
belong to the closure of $\mathcal{N}_{27}^{\#}$. 
The deformations $\J_{27}^{\varepsilon,\phi}\to\J_{26}^{\delta}$,
$\J_{23}^{\beta}\to\J_{41}$ and  $\J_{23}^{\beta}\to\J_{24}^{\gamma}$
follow from Sections 
\ref{exa:deformation-of-family} and \ref{deformation-between-families}.

Further, we show that $\J_{27}^{\varepsilon,\phi}\to \J_{23}^{\beta_0}$ for any 
fix $\beta_0\in\ka$. 
Consider the change of basis of $\J_{27}^{\varepsilon(t),\phi(t)}$: 
$$
\begin{array}{l}
e_{1}^{t}=(-1-t\beta_{0})n_{1}-n_{2},\\
e_{2}^{t}=tn_{2},\\
e_{3}^{t}=(1+t\beta_{0})^{2}n_{3}+n_{4},\\
e_{4}^{t}=-tn_{4},\\
e_{5}^{t}=t^{2}(1+t\beta_{0})n_{5},
\end{array}
\qquad (\varepsilon(t),\phi(t))=\left(-1-t\beta_{0},\frac{-1-t^{2}-t\beta_{0}}{(1+t\beta_{0})^{2}}\right).
$$
Note that $\varepsilon(t)\phi(t)=1+\frac{t^{2}}{1+t\beta_{0}}$
then, while for any $t\neq0$ this basis defines structure in $\mathcal{N}_{27}^{\#}$, 
for $t=0$ it gives $\J_{23}^{\beta_{0}}$. 

Applying Lemma \ref{lem:curva} to the basis transformation
$e_{1}^{t}=n_{2}$, $e_{2}^{t}=tn_{1}-tn_{2}$,
$e_{3}^{t}=tn_{3}$, $e_{4}^{t}=tn_{4}$ and 
$e_{5}^{t}=-2t^{2}n_{3}+t^{2}n_{5}$
of $\J_{41}$ we obtain $\J_{41}\to\J_{35}$.

To show that $\J^{\gamma}_{24}\to\J_{25}$, define a curve $\phi(t)\subset \JorN_5$,
as been the set of structure constants of algebras with the basis 
$e_{1}^{t}=n_{1}+n_{2}$, $e_{2}^{t}=-2tn_{2}$, $e_{3}^{t}=n_{3}+2n_{4}$,
$e_{4}^{t}=-2tn_{4}$ and $e_{5}^{t}=tn_{5}$, where $\{n_1,\dots,n_5\}$ is a basis of $\J_{24}^{\gamma(t)}$, $\gamma(t)=\frac{t-1}{2}$.
Analogously to Section \ref{exa:deformation-of-family}, the curve lies transversely
to the orbits of $\J_{24}^{\gamma}$. For any $t\neq0$,
$\phi(t)\subset\mathcal{N}_{24}^{\#}$, while
$\phi(0)$ corresponds to the structure constants of  $\J_{25}$. \end{proof}
\begin{lem}
\label{lem:27componente}The infinite union $\mathcal{N}_{27}^{\#}$
is not contained in the Zariski closure of the orbits of any algebra
in $\JorN_{5}$.\end{lem}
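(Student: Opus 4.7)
The plan is a proof by contradiction: I will assume some $\J\in\JorN_{5}$ satisfies $\mathcal{N}_{27}^{\#}\subseteq\overline{\J^{\G}}$ and derive a contradiction from a dimension count together with the classification.

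First, I will compute $\dim\overline{\mathcal{N}_{27}^{\#}}$. For generic $(\varepsilon,\phi)$ with $\varepsilon\phi\neq 1$, Table \ref{tab:5dnilponaoassoc} records $\dim\Aut(\J_{27}^{\varepsilon,\phi})=6$, so each orbit has dimension $25-6=19$. By Theorem \ref{thm:classification}(\ref{family-three}), the isomorphism classes in this family are parametrised, up to the involution $(\varepsilon,\phi)\mapsto(\phi,\varepsilon)$, by a two-dimensional open subset of $\ka^{2}$. Hence $\mathcal{N}_{27}^{\#}$ is the image of a morphism from an irreducible source with generic $6$-dimensional fibre, and its Zariski closure is irreducible of dimension $19+2=21$.

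Next, I will narrow down the candidates for $\J$. Since every $\J_{27}^{\varepsilon,\phi}$ is non-associative and the associative locus is Zariski-closed in $\JorN_{5}$, Corollary \ref{cor:assoc} forces $\J$ to be non-associative. The inclusion $\overline{\mathcal{N}_{27}^{\#}}\subseteq\overline{\J^{\G}}$ gives $\dim \J^{\G}\geq 21$ and hence $\dim\Aut(\J)\leq 4$. Applying Theorem \ref{thm:conditions} to $\J\to\J_{27}^{\varepsilon,\phi}$ for generic $(\varepsilon,\phi)$ further yields $\dim\Ann(\J)\leq 1$, $\dim\J^{2}\geq 3$ and $\dim\Z(\J)\leq 3$. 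A pass through Table \ref{tab:5dnilponaoassoc} shows that $\J_{22}$ is the only non-associative algebra meeting all four constraints simultaneously.

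Finally, I will rule out $\J=\J_{22}$. Its orbit has dimension $25-\dim\Aut(\J_{22})=21$ and, being a $\G$-orbit, is open in its irreducible closure $\overline{\J_{22}^{\G}}$; hence the boundary $\overline{\J_{22}^{\G}}\setminus\J_{22}^{\G}$ is a proper closed subvariety of dimension strictly less than $21$. Since $\J_{22}$ is not isomorphic to any $\J_{27}^{\varepsilon,\phi}$ by Theorem \ref{thm:classification}, the union $\mathcal{N}_{27}^{\#}$ is disjoint from $\J_{22}^{\G}$, so the standing assumption forces $\mathcal{N}_{27}^{\#}\subseteq\overline{\J_{22}^{\G}}\setminus\J_{22}^{\G}$, producing the impossible inequality $21=\dim\mathcal{N}_{27}^{\#}<21$. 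The main obstacle will be the candidate-enumeration step: one has to reliably verify both the generic invariants of $\J_{27}^{\varepsilon,\phi}$ (to get the four sharp inequalities) and then exhaustively check Table \ref{tab:5dnilponaoassoc} to confirm that $\J_{22}$ is really the unique survivor.
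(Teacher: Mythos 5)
Your proof is correct, and it hits the same bottleneck as the paper---the algebra $\J_{22}$, whose orbit has the same dimension $21$ as $\overline{\mathcal{N}_{27}^{\#}}$---but it gets there by a genuinely different route. The paper does not sweep through the classification: it leans on Lemma \ref{lem:notrigid}, which already confines every orbit closure to one of the five sets $\overline{\epsilon_{1}^{\G}}$, $\overline{\J_{21}^{\G}}$, $\overline{\J_{22}^{\G}}$, $\overline{\J_{40}^{\G}}$, $\overline{\mathcal{N}_{27}^{\#}}$, and then kills the first four candidates with exactly the invariants you use (non-associativity for $\epsilon_{1}$, $\dim\J^{2}$ for $\J_{21}$, $\dim\Ann$ for $\J_{40}$). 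Your candidate elimination is instead self-contained: the bound $\dim\Aut(\J)\leq 4$ coming from $\dim\J^{\G}\geq 21$, together with $\dim\Ann(\J)\leq 1$ and $\dim\J^{2}\geq 3$, really does leave only $\J_{22}$ in Table \ref{tab:5dnilponaoassoc} ($\J_{21}$, $\J_{9}$, $\J_{18}$, $\J_{20}$ fail on $\dim\J^{2}$, $\J_{40}$ on $\dim\Ann$, everything else on $\dim\Aut$), at the price of invoking completeness of the classification rather than Lemma \ref{lem:notrigid}. The endgame also differs: the paper argues that $\mathcal{N}_{27}^{\#}\subseteq\overline{\J_{22}^{\G}}$ would make $\J_{22}$ rigid, forces $\overline{\J_{22}^{\G}}=\overline{\mathcal{N}_{27}^{\#}}$ by equality of dimensions, and gets a contradiction from the open orbit $\J_{22}^{\G}$ meeting the dense subset $\mathcal{N}_{27}^{\#}$; your version---$\mathcal{N}_{27}^{\#}$ would be trapped in the boundary $\overline{\J_{22}^{\G}}\setminus\J_{22}^{\G}$, a proper closed subset of an irreducible $21$-dimensional variety---is a cleaner packaging of the same dimension obstruction and avoids the detour through rigidity and irreducible components. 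Both arguments ultimately rest on the two facts you isolate: $\dim\overline{\mathcal{N}_{27}^{\#}}=21=\dim\J_{22}^{\G}$ and $\J_{22}\not\simeq\J_{27}^{\varepsilon,\phi}$.
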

\begin{proof}
By Corollary \ref{cor:assoc} the non-associative algebra 
$\mathcal{\J}_{27}^{\varepsilon,\phi}$
can not be deformed into the associative algebra $\epsilon_{1}$ for any 
$\varepsilon,\phi\in\ka$. 
Observe that for any $\varepsilon,\phi\in\ka$ with $\varepsilon\phi\neq1$,  
$\dim\left(\J_{27}^{\varepsilon,\phi}\right)^{2}=3$ and 
$\dim\Ann(\J_{27}^{\varepsilon,\phi})=1$. Applying Theorem 
\ref{thm:conditions}
item \ref{enu:Pot} and item \ref{enu:Ann} we obtain that 
$\J_{21}\nrightarrow\mathcal{\J}_{27}^{\varepsilon,\phi}$ and 
$\J_{40}\nrightarrow\mathcal{\J}_{27}^{\varepsilon,\phi}$, 
when $\varepsilon\phi\neq1$, respectively. 

To see that $\mathcal{\J}_{22}\nrightarrow\J_{27}^{\varepsilon,\phi}$, first
observe that $\J_{22}$ does not deform into the other three algebras from Lemma 
\ref{lem:notrigid}. Indeed, comparing the dimensions of the  automorphism group 
we deduce that $\epsilon_{1}\nrightarrow\J_{22}$
and $\J_{40}\nrightarrow\J_{22}$, while the dimensions of the second power provide that $\J_{22}^{\G}\nsubseteq\overline{\mathcal{\J}_{21}^{\G}}$.
Suppose that $\mathcal{N}_{27}^{\#}\subseteq\overline{\mathcal{\J}_{22}^{\G}}$
then $\J_{22}$ is rigid and $\mathcal{C}=\overline{\mathcal{\J}_{22}^{\G}}$
is a irreducible component of $\JorN_{5}$. Since
$\overline{\mathcal{N}_{27}^{\#}}$ is contained in the irreducible component
$\mathcal{C}$ and both has the same dimension $\dim\overline{\mathcal{N}_{27}^{\#}}=21=\dim\mathcal{C}$,
$\overline{\mathcal{N}_{27}^{\#}}=\mathcal{C}$. So, $\mathcal{N}_{27}^{\#}$ is 
a dense subset of $\mathcal{C}$ and $\J_{22}^{\G}$ is open in $\mathcal{C}$ 
then there exists $\J\in \mathcal{N}_{27}^{\#}$ such as $\J\in \J_{22}^{\G}$, 
i.e., $\J\simeq\J_{22}$, a contradiction by Theorem \ref{thm:classification}.

\end{proof}
\begin{cor} $\overline{\mathcal{N}_{27}^{\#}}$ determines a component of
the variety $\JorN_{5}$. In particular, we proved that a finite-dimensional
Jordan algebras not necessarily deforms into rigid one.
\end{cor}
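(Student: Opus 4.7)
The plan is to derive this corollary as a short consequence combining the two preceding lemmas, with only one genuinely new input: the irreducibility of $\overline{\mathcal{N}_{27}^{\#}}$.

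First, I would invoke Lemma~\ref{lem:notrigid} to get the covering
\[
\JorN_{5}\;=\;\overline{\epsilon_{1}^{\G}}\;\cup\;\overline{\J_{21}^{\G}}\;\cup\;\overline{\J_{22}^{\G}}\;\cup\;\overline{\J_{40}^{\G}}\;\cup\;\overline{\mathcal{N}_{27}^{\#}},
\]
so the irreducible components of $\JorN_{5}$ are among the irreducible closed subsets contained in this union. Next I would establish that $\overline{\mathcal{N}_{27}^{\#}}$ is itself irreducible: consider the morphism
\[
\Phi\colon U\times\G\;\longrightarrow\;\JorN_{5},\qquad (\varepsilon,\phi,g)\longmapsto g\cdot\J_{27}^{\varepsilon,\phi},
\]
where $U=\{(\varepsilon,\phi)\in\ka^{2}\mid\varepsilon\phi\neq 1\}$. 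Both factors of the domain are irreducible, so $U\times\G$ is irreducible; the image of an irreducible set under a morphism is irreducible, and $\operatorname{Im}\Phi=\mathcal{N}_{27}^{\#}$, hence $\overline{\mathcal{N}_{27}^{\#}}$ is irreducible.

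Then I would appeal to Lemma~\ref{lem:27componente}, which asserts that $\mathcal{N}_{27}^{\#}$ is not contained in $\overline{\epsilon_{1}^{\G}}$, $\overline{\J_{21}^{\G}}$, $\overline{\J_{22}^{\G}}$ or $\overline{\J_{40}^{\G}}$; taking closures, the same is true of $\overline{\mathcal{N}_{27}^{\#}}$. Since $\overline{\mathcal{N}_{27}^{\#}}$ is an irreducible closed subset of $\JorN_{5}$ that is not contained in any of the other four closures covering it, it must itself be a maximal irreducible closed subset, i.e.\ an irreducible component.

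For the ``in particular'' clause I would argue by contradiction: if $\overline{\mathcal{N}_{27}^{\#}}$ were the Zariski closure of a single $\G$-orbit $\J^{\G}$, then $\J$ would be rigid and would dominate every $\J_{27}^{\varepsilon,\phi}$ in the family; but by item~\ref{enu:Aut} of Theorem~\ref{thm:conditions} this would force $\dim\Aut(\J)<\dim\Aut(\J_{27}^{\varepsilon,\phi})=6$, while on the other hand $\J\in\overline{\mathcal{N}_{27}^{\#}}$ must be isomorphic to some $\J_{27}^{\varepsilon_{0},\phi_{0}}$ (the generic orbits of the component), giving $\dim\Aut(\J)=6$, a contradiction. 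The only delicate point in the whole argument is therefore the irreducibility of the parametric closure $\overline{\mathcal{N}_{27}^{\#}}$, which is handled cleanly by the morphism $\Phi$ above; everything else is bookkeeping between the two lemmas.
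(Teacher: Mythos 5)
Your proposal is correct and follows essentially the same route as the paper: combine Lemma~\ref{lem:27componente} with the irreducibility of $\overline{\mathcal{N}_{27}^{\#}}$, the latter coming from the fact that $\mathcal{N}_{27}^{\#}$ is parametrized by an irreducible set. If anything, your irreducibility step (realizing $\mathcal{N}_{27}^{\#}$ as the image of the morphism $\Phi$ on $U\times\G$ and taking its closure) is a more careful rendering than the paper's, which loosely asserts that $\overline{\mathcal{N}_{27}^{\#}}$ \emph{is} a product of $\left(\J_{27}^{\varepsilon,\phi}\right)^{\G}$ with $\ka^{2}$; your explicit treatment of the covering from Lemma~\ref{lem:notrigid} and of the ``in particular'' clause fills in details the paper leaves implicit.
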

\begin{proof} From Lemma \ref{lem:27componente} it follows that we only have to show that all algebras in 
$\overline{\mathcal{N}_{27}^{\#}}$ belong to the same component. In fact 
$\overline{\mathcal{N}_{27}^{\#}}$ is a product of two irreducible varieties  
$\left(\J_{27}^{\varepsilon,\phi}\right)^{\G}$ and $\ka^2=\{(\varepsilon,\phi)\,|\, \varepsilon,\phi\in \ka\}$ and is therefore irreducible. 
\end{proof}
\begin{lem}
\label{lem:rigid} Algebras $\epsilon_{1}$, $\J_{21}$, $\J_{22}$
and $\J_{40}$ are rigid.\end{lem}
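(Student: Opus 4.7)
By Lemma \ref{lem:notrigid}, $\JorN_5$ is covered by the five closed irreducible sets $\overline{\epsilon_1^\G}$, $\overline{\J_{21}^\G}$, $\overline{\J_{22}^\G}$, $\overline{\J_{40}^\G}$ and $\overline{\mathcal{N}_{27}^{\#}}$. Since every $\G$-orbit is locally closed in its closure, to show that $\J\in\{\epsilon_1,\J_{21},\J_{22},\J_{40}\}$ is rigid it suffices to prove that $\overline{\J^\G}$ is an irreducible component of $\JorN_5$, equivalently that $\J$ belongs to none of the other four sets in the cover. The plan is therefore to organize the argument as a case-by-case verification of these non-containments: twelve pairwise checks among the four named orbit-closures, plus four checks against $\overline{\mathcal{N}_{27}^{\#}}$.

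The four checks against the infinite family can be dispatched uniformly by a semicontinuity argument. The proof of Theorem \ref{thm:conditions}\eqref{enu:Aut} shows that $\{\J'\in\JorN_5 : \dim\Aut(\J')\geq k\}$ is Zariski-closed for every $k$; since every member of $\mathcal{N}_{27}^{\#}$ satisfies $\dim\Aut = 6$, the same inequality therefore holds throughout $\overline{\mathcal{N}_{27}^{\#}}$. Reading off the tables, $\dim\Aut(\epsilon_1)=5$, $\dim\Aut(\J_{21})=3$, and $\dim\Aut(\J_{22})=\dim\Aut(\J_{40})=4$, all strictly less than $6$, so none of the four candidates lies in $\overline{\mathcal{N}_{27}^{\#}}$.

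The twelve remaining pairwise non-containments are witnessed one at a time by the invariants in Theorem \ref{thm:conditions}. The algebra $\epsilon_1$ is ruled out as the target of a deformation from any of $\J_{21}$, $\J_{22}$, $\J_{40}$ by item \eqref{enu:Pot}, since $\dim\epsilon_1^2 = 4$ while $\dim\J_{21}^2 = 2$ and $\dim\J_{22}^2 = \dim\J_{40}^2 = 3$. The algebra $\J_{21}$ is ruled out by item \eqref{enu:Aut}, as $\dim\Aut(\J_{21})=3$ is strictly smaller than $\dim\Aut$ of any of the other three. For $\J_{22}$, item \eqref{enu:Aut} handles $\epsilon_1 \nrightarrow \J_{22}$, item \eqref{enu:Pot} handles $\J_{21}\nrightarrow\J_{22}$, and item \eqref{enu:Ann} handles $\J_{40}\nrightarrow\J_{22}$ since $\dim\Ann(\J_{40})=2>1=\dim\Ann(\J_{22})$. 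The case of $\J_{40}$ is symmetric: item \eqref{enu:Aut} rules out $\epsilon_1$ and $\J_{22}$, and item \eqref{enu:Pot} rules out $\J_{21}$.

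The whole argument is thus a bookkeeping on invariants already collected in Tables \ref{tab:5dnilpoassoc} and \ref{tab:5dnilponaoassoc}; no further curve constructions or cohomology computations are required. The only slightly subtle point is the semicontinuity step treating the whole infinite family $\mathcal{N}_{27}^{\#}$ as a single closed set, but this is immediate from the closedness of the loci $\{\dim\Aut\geq k\}$ noted inside the proof of Theorem \ref{thm:conditions}, so I expect no serious obstacle.
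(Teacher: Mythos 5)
Your proposal is correct, and its skeleton (reduce rigidity to showing each of the four algebras lies in none of the other four closed irreducible sets from Lemma \ref{lem:notrigid}, then discharge the non-containments with the invariants of Theorem \ref{thm:conditions}) is the same as the paper's. Where you genuinely diverge is in the treatment of $\overline{\mathcal{N}_{27}^{\#}}$: the paper handles the four algebras one at a time with ad hoc arguments --- a dimension count $\dim\J_{21}^{\G}=22>21=\dim\overline{\mathcal{N}_{27}^{\#}}$ for $\J_{21}$, and for $\J_{22}$ and $\J_{40}$ an indirect argument (equal dimensions force $\overline{\J^{\G}}=\overline{\mathcal{N}_{27}^{\#}}$, then a contradiction via the open orbit or via $\dim\Ann$), while $\epsilon_{1}$ is not explicitly checked against the family at all. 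Your single upper-semicontinuity step, using that $\{\J'\mid\dim\Aut(\J')\geq 6\}$ is closed and contains $\mathcal{N}_{27}^{\#}$ while $\dim\Aut$ of each candidate is at most $5$, dispatches all four cases uniformly and even closes the small gap concerning $\epsilon_{1}$; its only cost is that you lean on a semicontinuity fact the paper outsources to \cite{kashubamartin} rather than proves (though it is standard, and the paper proves the analogous closedness for $\dim\Z$ in item \eqref{enu:ZAssoc}). Your choices of invariants for some pairwise checks also differ harmlessly from the paper's (e.g.\ you use $\dim\Aut$ where the paper invokes associativity or $\dim\Z$ to rule out deformations into $\J_{21}$, and $\dim\J_{40}^{2}$ rather than $\dim\Ann(\J_{40})$ against $\epsilon_{1}$); I checked all twelve pairs against Tables \ref{tab:5dnilpoassoc} and \ref{tab:5dnilponaoassoc} and each cited inequality does violate the relevant item of Theorem \ref{thm:conditions}. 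One cosmetic caution: your phrase ``equivalently that $\J$ belongs to none of the other four sets'' overstates an equivalence --- what you actually need and use is only the implication that non-membership in the other four sets makes $\overline{\J^{\G}}$ maximal in the cover, hence a component, and makes $\J^{\G}$ open in $\JorN_{5}$; that implication is correct as you argue it.
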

\begin{proof}
By a similar argument as in proof of Lemma \ref{lem:27componente} we can show  that
${\mathcal{\J}_{22}^{\G}} \not\subseteq \overline{\mathcal{N}_{27}^{\#}}  $  and therefore $\J_{22}$ is rigid.
To show that the other three algebras are rigid it is sufficient to prove that there 
are no deformations between them and that 
none of these algebras is dominated by the family $\J_{27}^{\varepsilon,\phi}$. 
 
Using the dimension argument for the second power of an algebra we have $\J_{21}\nrightarrow\epsilon_{1}$,
$\J_{22}\nrightarrow\epsilon_{1}$ and $\J_{21}\nrightarrow\J_{40}$, while comparing the 
dimension of the annihilator we obtain $\J_{40}\nrightarrow\epsilon_{1}$. Thus we have shown that
$\epsilon_{1}$ is a rigid algebra in $\JorN_{5}$ too. The associative algebra 
$\epsilon_{1}$ can not dominate non-associative algebras, thus 
$\epsilon_{1}\nrightarrow\J_{21}$ and $\epsilon_{1}\nrightarrow\J_{40}$. Also, due
to the dimension argument for the automorphism group we get 
$\J_{22}\nrightarrow\J_{40}$ and comparing the dimension of the associative center we have  $\J_{22}\nrightarrow\J_{21}$ and $\J_{40}\nrightarrow\J_{21}$. On the other
hand $\dim\J_{21}^{\G}=22$ while $\dim\overline{\mathcal{N}_{27}^{\#}}=21$
leading to $\J_{21}^{\G}\nsubseteq\overline{\mathcal{N}_{27}^{\#}}$ and therefore
$\J_{21}$ is a rigid algebra. Finally suppose that  $\J_{40}$ belongs to the 
component $\overline{\mathcal{N}_{27}^{\#}}$. Note 
that the dimensions of $\J_{40}^{\G}$ and  $\mathcal{N}_{27}^{\#}$ are both equal to $21$,
therefore 
$$\overline{\J_{40}^{\G}}=\overline{\mathcal{N}_{27}^{\#}}.$$
Which is impossible since $\dim\Ann(\J_{40})=2$, while 
$\dim\Ann(\J_{27}^{\varepsilon,\phi})=1$ for $\varepsilon,\phi \in\ka$ with 
$\varepsilon\phi\neq1$. Consequently $\J_{40}$ is a rigid algebra.
\end{proof}

Finally, note that the change of basis given by identity matrix $I_5$ multiplied by $t$
guarantees $\J\to\epsilon_{25}$ for any $\J\in\JorN_{5}$. Consequently, $\JorN_{5}$ is a connected affine
variety. Therefore we have proved the following theorem.
\begin{thm}
The algebraic variety $\JorN_{5}$ of five-dimensional nilpotent Jordan
algebras is a connected affine variety with an infinite
number of orbits under the action of the group $\G$. Furthermore,
it has four irreducible components given by Zariski closures of the
orbits of the rigid algebras $\epsilon_{1}$, $\J_{21}$,
$\J_{22}$ and $\J_{40}$ and one more component that is the Zariski
closure of the infinite union $\mathcal{N}_{27}^{\#}$.
\end{thm}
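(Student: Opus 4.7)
The plan is to assemble the statement from the three lemmas and the corollary proved just above, together with one simple connectedness observation. First I would dispose of connectedness: the curve $g(t)=t\cdot I_{5}$ (scaling by $t\in\ka$) acts on any $\J\in\JorN_{5}$ and, as $t\to 0$, all structure constants tend to zero, so $\J\to\epsilon_{25}$ (the abelian algebra); hence every orbit has $\epsilon_{25}^{\G}$ in its closure, and $\JorN_{5}$ is connected. The infiniteness of the orbit set is immediate from Theorem \ref{thm:classification}, since the family $\J_{27}^{\varepsilon,\phi}$ alone produces pairwise non-isomorphic algebras parametrized by a two-dimensional piece of $\ka^{2}$.

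Next I would justify the decomposition into exactly five components. By Lemma \ref{lem:notrigid}, every point of $\JorN_{5}$ lies in
$$\overline{\epsilon_{1}^{\G}}\cup\overline{\J_{21}^{\G}}\cup\overline{\J_{22}^{\G}}\cup\overline{\J_{40}^{\G}}\cup\overline{\mathcal{N}_{27}^{\#}},$$
so $\JorN_{5}$ is the union of these five closed sets. Each of the first four is irreducible because the orbit $\J^{\G}$ is the image under a regular map of the irreducible group $\G=\GL_{5}$, and the closure of an irreducible set is irreducible; the fifth, $\overline{\mathcal{N}_{27}^{\#}}$, was shown to be irreducible in the corollary following Lemma \ref{lem:27componente}, by exhibiting it as the image of the irreducible variety $\left(\J_{27}^{\varepsilon,\phi}\right)^{\G}\times\ka^{2}$.

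It then remains to check that none of the five irreducible closed subsets is contained in the union of the others, so that each is actually a maximal irreducible closed subset, i.e.\ an irreducible component. For the four rigid algebras this is precisely the content of Lemma \ref{lem:rigid}, where the pairwise non-dominance between $\epsilon_{1},\J_{21},\J_{22},\J_{40}$ and the non-inclusion of each of their orbit closures into $\overline{\mathcal{N}_{27}^{\#}}$ are verified using the invariants of Theorem \ref{thm:conditions} (dimensions of $\Ann$, of $\J^{2}$, of $\Z(\J)$, of $\Aut(\J)$, and the associative/non-associative dichotomy of Corollary \ref{cor:assoc}). The non-containment $\overline{\mathcal{N}_{27}^{\#}}\not\subseteq\overline{\J^{\G}}$ for every single $\J$ on the list is Lemma \ref{lem:27componente}. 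Combining these, the five listed subsets are exactly the maximal irreducible closed subsets, which proves the theorem.

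The only place where genuine argument (beyond bookkeeping) is required is the step that $\overline{\mathcal{N}_{27}^{\#}}$ is not absorbed by any of $\overline{\epsilon_{1}^{\G}},\overline{\J_{21}^{\G}},\overline{\J_{22}^{\G}},\overline{\J_{40}^{\G}}$; this is the main obstacle, since $\mathcal{N}_{27}^{\#}$ contains no rigid member and the naive invariants do not immediately separate it from $\overline{\J_{22}^{\G}}$. The delicate case $\overline{\mathcal{N}_{27}^{\#}}\subseteq\overline{\J_{22}^{\G}}$ is ruled out in Lemma \ref{lem:27componente} by a dimension comparison combined with Theorem \ref{thm:classification}: were the inclusion to hold, equality of dimensions would force $\overline{\mathcal{N}_{27}^{\#}}=\overline{\J_{22}^{\G}}$, forcing a member of $\mathcal{N}_{27}^{\#}$ to be isomorphic to $\J_{22}$, contradicting the algebraic classification. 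With that dimension bookkeeping already carried out in the preceding lemmas, the proof of the theorem reduces to collecting the above observations in one line.
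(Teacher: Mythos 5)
Your proposal is correct and follows essentially the same route as the paper: the paper likewise obtains connectedness from the scaling curve $t\cdot I_{5}$ degenerating every algebra to $\epsilon_{25}$, and then assembles the theorem from Lemma \ref{lem:notrigid} (the five closed irreducible sets cover $\JorN_{5}$), Lemma \ref{lem:rigid} and Lemma \ref{lem:27componente} with its corollary (no one of the five is absorbed by the others). Your write-up merely makes explicit the standard maximality argument that the paper leaves implicit.
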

The diagram of $\JorN_{5}$ was represented in Figure \ref{fig:orbitas}.

\begin{figure}  \centering \includegraphics[trim={1.2cm 0cm 0cm 2.7cm}, 
clip=true, angle=90,width=0.60\textwidth]{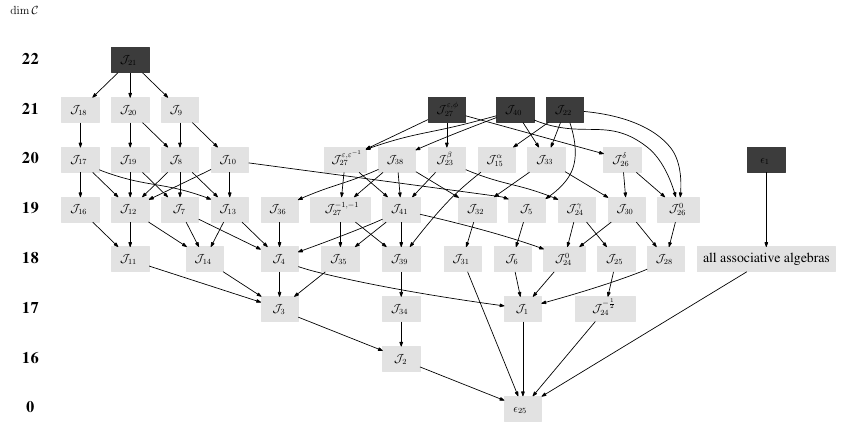}\caption{ The irreducible 
components of $\JorN_5$\label{fig:orbitas} } \end{figure}

\newpage

\bibliographystyle{elsarticle-num.bst}
\bibliography{library}

\end{document}